\newtheorem{theorem}{Theorem}[section]
\newtheorem{proposition}[theorem]{Proposition}
\newtheorem{lemma}[theorem]{Lemma}
\newtheorem{corollary}[theorem]{Corollary}
\theoremstyle{definition}
\newtheorem{remark}[theorem]{Remark}
\numberwithin{equation}{section}
\newcommand{\N}{\mathbb{N}}                        
\newcommand{\K}{\mathbb{K}}                        
\newcommand{\R}{\mathbb{R}}                        
\newcommand{\C}{\mathbb{C}}                        
\newcommand{\OO}{\mathcal{O}}                      
\newcommand{\supp}{\mathrm{supp}}                  
\newcommand{\inexp}{\mathrm{exp}}                  
\newcommand{\NN}{\mathfrak{N}}                     
\newcommand{\LL}{\Lambda}
\newcommand{\mm}{\mathfrak{m}}                     
\newcommand{\nn}{\mathfrak{n}}                     
\newcommand{\tx}{\tilde{x}}
\newcommand{\ins}{\mathrm{in^*}}
\newcommand{\verts}{\mathcal{V}}
\begin{document}

\title{Finite determinacy and approximation of flat maps.}

\author{Aftab Patel}
\address{Department of Mathematics, The University of Western Ontario, London, Ontario, Canada N6A 5B7}
\email{apate378@uwo.ca}

\subjclass[2010]{32S05, 32S10, 32B99, 32C07, 14P15, 14P20, 13H10, 13C14}
\keywords{Cohen-Macaulay, singularity, Nash approximation, Hilbert-Samuel function, flatness, special fibre, tangent cone}

\begin{abstract}
	This paper considers the problems of finite determinacy and approximation of flat analytic maps from 
	germs of real or complex analytic spaces. It is shown that the flatness of analytic maps from germs of 
	real or complex analytic spaces whose local rings are Cohen-Macaulay is finitely determined. 
	Further, it is shown that flat maps from complete intersection and Cohen-Macaulay analytic germs can 
	be arbitrarily closely approximated by algebraic and Nash maps respectively in such a way 
	that the Hilbert-Samuel function of the special fibre is preserved. It is also proved that 
	in the complex case the preservation of the Hilbert-Samuel function implies the preservation 
	of Whitney's tangent cone. 
\end{abstract}
\maketitle


\section{Introduction}
\label{sec:intro}

In the study of the germs of real or complex analytic spaces and maps defined on them, it is of interest 
to know whether one can replace the convergent power series that describe them with finite polynomial
truncations or power series that are Nash, that is, which satisfy polynomial equations. 

J. Adamus and H. Seyedinejad in \cite{AS3} proved that the property of flatness of 
analytic morphisms from germs of analytic spaces whose local rings are complete intersections 
is finitely determined (\cite[Theorem 4.9]{AS3}). They also asked \cite[Question 4.10]{AS3} 
whether the complete intersection assumption on the domain can be relaxed. 
This paper provides an affirmative answer to this question: the flatness of 
mappings from analytic germs whose local rings are Cohen-Macaulay (henceforth called \emph{Cohen-Macaulay analytic germs}) 
is finitely determined. 
This is important because for complex analytic spaces the Cohen-Macaulay 
case is well understood geometrically. For  analytic maps from Cohen-Macaulay germs of complex analytic 
spaces to euclidean germs, flatness is equivalent to openness \cite[Proposition 3.20]{Fis}. 

Flat maps from germs of real or complex analytic spaces are central to deformation theory. In 
this context, it is of interest to know if analytic flat maps can be approximated by maps that 
are Nash in such a way that algebro-geometric properties of the special fibre are preserved. 
In this paper, it is shown that flat maps from analytic germs that are complete intersections 
can be polynomially approximated in such a 
way that the Hilbert-Samuel function of the special fibre is preserved.  
Further, it is shown that there exist Nash approximations of flat maps from Cohen-Macaulay analytic germs 
which preserve the Hilbert-Samuel function of the special fibre. The Hilbert-Samuel function encodes 
many algebro-geometric properties of analytic singularities and is 
considered a measure of singularity. It plays a central role in Hironaka's resolution of 
singularities (see \cite{BM2}), where it is used to 
track the progress of the desingularization. In this paper it is also shown that 
in the complex analytic case, the preservation of the Hilbert-Samuel function 
implies the preservation of Whitney's tangent cone \cite{Wh}. 

The results in this paper are proved using tools and techniques 
developed by the author and J. Adamus in \cite{JP} for proving the existence of 
equisingular algebraic approximations of analytic singularities. 

\subsection{Main Results}

Let $\K = \R$ or $\C$. Let $x = (x_1, \dots, x_n)$ and let $\mm$ denote the maximal ideal in 
the ring of convergent power series $\K\{x\}$. For a natural number $k\in \N$ and a power 
series $F \in \K\{x\}$ the \emph{$k$-jet} of $F$, denoted by $j^k F$, is the image of $F$ under 
the canonical epimorphism $\K\{x\} \rightarrow \K\{x\}/\mm^{k+1}$. For an $m$-tuple 
$\phi = (\phi_1, \dots, \phi_m) \in \K\{x\}^m$ let $j^k \phi = (j^k \phi_1, \dots, j^k \phi_m)$. 
An analytic mapping $\phi: X \rightarrow Y$ between analytic spaces $X$ and $Y$ is called \emph{flat
at $\xi \in X$} when the pullback homomorphism $\phi^*_{\xi}:\OO_{Y, \phi(\xi)} \rightarrow \OO_{X, \xi}$ makes 
$\OO_{X, \xi}$ into a flat $\OO_{Y, \phi(\xi)}$-module.

The first result that is proved in this paper is the following:  
\begin{theorem}
	\label{thm:cmfindet}
	Let $X$ be a $\K$-analytic subspace of $\K^n$. Suppose that $0 \in X$ and the local 
	ring $\OO_{X, 0}$ is Cohen-Macaulay. Also, let $\phi=(\phi_1, \dots, \phi_m): X \rightarrow \K^m$
	with $\phi(0) = 0$ be a $\K$-analytic mapping. Then there exists $\mu_0 \in \N$, 
	such that the following are equivalent:
	\begin{itemize}
		\item[(i)] $\phi$ is flat at zero. 
		\item[(ii)] For every $\mu \geq \mu_0$ every mapping $\psi=(\psi_1, \dots, \psi_m):X \rightarrow \K^m$ satisfying $j^{\mu} \phi = j^{\mu} \psi$ is flat at zero. 
		\item[(iii)] There exists $\mu \geq \mu_0$ such that every mapping $\psi=(\psi_1, \dots, \psi_m):X \rightarrow \K^m$ satisfying $j^{\mu} \phi = j^{\mu} \psi$ is flat at zero. 
	\end{itemize}
\end{theorem}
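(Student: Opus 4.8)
The plan is to reduce everything to the dimension criterion for flatness over a regular local ring (``miracle flatness'') --- which is where the Cohen--Macaulay hypothesis does the real work --- together with the stability of the property ``part of a system of parameters'' under perturbations that are small in the $\mm$-adic sense. First I would dispose of the formal implications. For any $\mu$ the family of maps $\psi$ with $j^{\mu}\psi=j^{\mu}\phi$ contains $\phi$, so (ii) $\Rightarrow$ (iii) (take $\mu=\mu_0$) and (iii) $\Rightarrow$ (i) (take $\psi=\phi$) are immediate; moreover, if $\phi$ is not flat at $0$ then $\phi$ itself already witnesses the failure of (ii) and (iii) for every choice of $\mu_0$, so in that case all three statements are false and any $\mu_0$ works. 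Thus it suffices to treat the case where $\phi$ is flat at $0$ and to produce a single $\mu_0$ for which $j^{\mu_0}\psi=j^{\mu_0}\phi$ forces $\psi$ to be flat at $0$; since $j^{\mu}\psi=j^{\mu}\phi$ implies $j^{\mu_0}\psi=j^{\mu_0}\phi$ for $\mu\ge\mu_0$, this yields (i) $\Rightarrow$ (ii).

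Next I would translate flatness into a dimension equality. Write $R:=\OO_{X,0}$, a Cohen--Macaulay Noetherian local ring with maximal ideal $\mm_R$; since $\phi(0)=0$ we have $\phi_i\in\mm_R$, and $\phi^{*}_{0}$ presents $R$ as a module over the regular local ring $S:=\OO_{\K^{m},0}\cong\K\{y_1,\dots,y_m\}$ via $y_i\mapsto\phi_i$. As $R$ is Cohen--Macaulay and $S$ regular, $R$ is flat over $S$ if and only if $\dim R/(\phi_1,\dots,\phi_m)R=\dim R-m$, i.e. if and only if $\phi_1,\dots,\phi_m$ is part of a system of parameters of $R$ (equivalently, $R$ being Cohen--Macaulay, an $R$-regular sequence). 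The same equivalence applied to $\psi$ reduces ``$\psi$ flat at $0$'' to ``$\psi_1,\dots,\psi_m$ is again part of a system of parameters of $R$''.

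The heart of the argument is then a Nakayama-type perturbation step. Assuming $\phi$ flat at $0$, extend $\phi_1,\dots,\phi_m$ to a system of parameters $\phi_1,\dots,\phi_m,g_1,\dots,g_r$ of $R$, where $r=\dim R-m\ge 0$; the ideal $\mathfrak q:=(\phi_1,\dots,\phi_m,g_1,\dots,g_r)R$ is $\mm_R$-primary, so $\mm_R^{N}\subseteq\mathfrak q$ for some $N\in\N$, and I set $\mu_0:=N$. If $j^{\mu_0}\psi=j^{\mu_0}\phi$ then $\psi_i-\phi_i\in\mm_R^{N+1}\subseteq\mm_R\mathfrak q$ for each $i$, so every generator of $\mathfrak q$ lies in $J+\mm_R\mathfrak q$, where $J:=(\psi_1,\dots,\psi_m,g_1,\dots,g_r)R$, while also $J\subseteq\mathfrak q$; Nakayama's lemma applied to the finitely generated ideal $\mathfrak q$ then gives $J=\mathfrak q$, which is still $\mm_R$-primary. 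Hence $\psi_1,\dots,\psi_m,g_1,\dots,g_r$ is a system of parameters of $R$, so $\dim R/(\psi_1,\dots,\psi_m)R=\dim R-m$, and the flatness criterion returns that $\psi$ is flat at $0$. Note that $\mu_0$ depends only on the chosen extension to a system of parameters, not on $\psi$, and that the whole argument is purely ring-theoretic, hence uniform in $\K=\R$ or $\C$.

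I expect the only non-formal ingredient to be the dimension criterion for flatness over a regular local ring, and this is precisely where Cohen--Macaulayness is indispensable: without it a partial system of parameters need not be a regular sequence and flatness genuinely breaks under the perturbation. The main point to watch, I think, is not to attempt to compare the whole special fibres of $\phi$ and $\psi$ --- their Hilbert--Samuel functions will in general disagree in degrees exceeding $\mu_0$ --- but only their \emph{dimensions}, which are encoded by the finite datum ``$\mm_R^{N}\subseteq(\phi_1,\dots,\phi_m,g_1,\dots,g_r)R$''. One could instead route the argument through the Hilbert--Samuel function of the special fibre and the stability of the diagram of initial exponents $\NN$ under high-order jets, along the lines of \cite{JP}, but the dimension count above is the shortest path. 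Finally, the degenerate case $m>\dim R$ should be recorded separately: then no analytic map $X\to\K^{m}$ is flat at $0$, all three statements are false, and any $\mu_0$ serves.
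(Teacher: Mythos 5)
Your proposal is correct, and it takes a genuinely different and more elementary route than the paper. The paper encodes the dimension condition $\dim\K\{x\}/(I+J)=n-k-m$ combinatorially via the diagram of initial exponents (Proposition \ref{prop:diagram-dim}), after a generic linear change of coordinates, and shows that the relevant vertices on the coordinate axes survive a perturbation of order $\mu$; you instead work entirely inside $R=\OO_{X,0}$, invoking the same flatness criterion \cite[Theorem B.8.11]{GLS} but replacing the diagram argument by the classical Nakayama perturbation of a system of parameters ($\mm_R^{N}\subseteq\mathfrak q$ forces $(\psi_1,\dots,\psi_m,g_1,\dots,g_r)=\mathfrak q$). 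This is coordinate-free, avoids Hironaka's division and flatness criteria altogether, and yields an explicit $\mu_0$ (the exponent $N$ with $\mm_R^N\subseteq\mathfrak q$). Your observation that (iii) $\Rightarrow$ (i) is formal --- $\phi$ itself satisfies $j^{\mu}\phi=j^{\mu}\phi$ --- is valid and disposes of that implication for the theorem as literally stated; note, however, that the paper's separate argument for (iii) $\Rightarrow$ (i), via Proposition \ref{prop:hirflat2} applied to the graph ideal $L$, in fact proves the strictly stronger statement that non-flatness also propagates to every $\psi$ with $j^{\mu}\psi=j^{\mu}\phi$ (so that for $\mu\geq\mu_0$ the flatness of $\psi$ is \emph{equivalent} to that of $\phi$), which your formal reduction does not recover. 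The diagram machinery the paper deploys here is also the machinery it needs anyway for Theorems \ref{thm:ciapprox} and \ref{thm:cmapprox}, whereas your argument is self-contained; each approach has its merits, and yours is a complete proof of the stated equivalence.
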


The above theorem is a generalization of 
\cite[Theorem 4.9]{AS3} and implies, in particular, that $j^{\mu}\phi$ defined by the polynomial 
truncation of $\phi$ is flat at zero for all $\mu \geq \mu_0$.

The next problems considered are the approximation of flat-maps from germs of analytic 
spaces whose local rings are complete intersections or Cohen-Macaulay. In the case 
of maps from complete intersections it is proved that any approximation that 
agrees with the defining power series of the original map upto to sufficiently high order
has a special fibre with the same Hilbert-Samuel function as that of the original (Theorem \ref{thm:ciapprox}).
This is a straightforward consequence of \cite[Theorem 4.9]{AS3} and \cite[Theorem 7.3]{JP}. 

A power 
series $F \in \K\{x\}$ is called an \emph{algebraic power series} or \emph{Nash} if it satisfies a non-trivial polynomial equation and 
the ring of such power series is denoted by $\K\langle x \rangle$. A map 
$\phi:X\rightarrow Y$ is called a \emph{Nash map} if it is defined by 
Nash power series. Recall that for an analytic germ $X_a$ the \emph{Hilbert-Samuel} 
function is defined as
\begin{equation}
	\label{eq:hsdef}
	H_{X_a} (\eta) = \dim_{\K} \OO_{X, a}/\mm_{X_a}^{\eta + 1} \qquad,\text{for all }\eta \in \N,
\end{equation}
where $\mm_{X_a}$ is the maximal ideal of $\OO_{X, a}$. 
If $\OO_{X, a} \cong \K\{x\}/I$, where $I$ is an ideal in $\K\{x\}$, the above is identified with the following
\begin{equation}
	\label{eq:hsdef2}
	H_{I} (\eta) = \dim_{\K} \K\{x\}/(I + \mm^{\eta + 1})\qquad,\text{for all }\eta \in \N.
\end{equation}
For flat analytic maps from Cohen-Macaulay analytic germs, there exist Nash approximations that 
are flat and whose special fibres have the same Hilbert-Samuel function as that of the 
special fibre of the original. Specifically,  
\begin{theorem}
\label{thm:cmapprox}
	Let $X$ be a $\K$-analytic subspace of $\K^n$. Suppose that $0 \in X$ and the local ring 
	$\OO_{X,0}$ is Cohen-Macaulay and that $\OO_{X,0} = \K\{x\}/I$ where 
	$I = (F_1, \dots, F_s)$. Also, let $\phi=(\phi_1, \dots, \phi_m): X \rightarrow \K^m$ 
	with $\phi(0) = 0$ be a $\K$-analytic mapping which is flat at 0. Then there exists $\mu_0 \in \N$, such that 
	for each $\mu \geq \mu_0$ there exist:  
	\begin{itemize}
		\item[(a)] A Nash, Cohen-Macaulay germ $\hat{X}_0 \subseteq \K^n$ with $\OO_{\hat{X}, 0} = \K\{x\}/\hat{I}$, where
			$\hat{I} = (G_{1}, \dots, G_{s})$, for some $G_{i} \in \K \langle x \rangle$ that satisfy
			$j^{\mu}G_{i} = j^{\mu} F_i$ for all $i$, such that $H_{\hat{I}} = H_{I}$.  
		\item[(b)] A Nash map 
			$\psi =(\psi_{1}, \dots \psi_{n}) : \hat{X} \rightarrow \K^m$ which is flat at 0 with 
			$j^{\mu} \phi_{i} = j^{\mu} \psi_{i}$ for all $i$, such that if 
			$J = (\phi_1, \dots, \phi_m) \subseteq \K\{x\}$ and 
			$\hat{J}=(\psi_1, \dots, \psi_m)\subseteq \K\{x\}$, then $H_{I + J} = H_{\hat{I} + \hat{J}}$. 
	\end{itemize}
\end{theorem}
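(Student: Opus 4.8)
The plan is to obtain (a) and (b) together from a single ``nested'' application of the equisingular Nash‑approximation machinery of \cite{JP}, after recording a regular‑sequence criterion that converts flatness into a statement purely about Hilbert--Samuel functions. First I would reformulate flatness. The base ring $\OO_{\K^m,0}=\K\{y\}$, $y=(y_1,\dots,y_m)$, is regular local with maximal ideal $(y_1,\dots,y_m)$, so a finitely generated $\K\{y\}$‑module is flat precisely when $y_1,\dots,y_m$ act on it as a regular sequence; for $\OO_{X,0}=\K\{x\}/I$ this means that $\phi$ is flat at $0$ if and only if $\phi_1,\dots,\phi_m$ is an $\OO_{X,0}$‑regular sequence. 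Since $\OO_{X,0}$ is Cohen--Macaulay of some dimension $d$, this is in turn equivalent to $\dim\K\{x\}/(I+J)=d-m$, and in that case $\K\{x\}/(I+J)$ is again Cohen--Macaulay, being the quotient of a Cohen--Macaulay ring by a regular sequence. The same equivalences apply to $\hat X$ and $\psi$ once $\OO_{\hat X,0}$ is known to be Cohen--Macaulay. The point is that flatness of $\psi$ then costs nothing extra: if I can produce $G_i$ and $\psi_j$ with $H_{\hat I}=H_I$ and $H_{\hat I+\hat J}=H_{I+J}$, then $\dim\K\{x\}/\hat I=d$ and $\dim\K\{x\}/(\hat I+\hat J)=d-m$, so $\psi_1,\dots,\psi_m$ is part of a system of parameters of the Cohen--Macaulay ring $\OO_{\hat X,0}$, hence an $\OO_{\hat X,0}$‑regular sequence, hence $\psi$ is flat at $0$. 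Thus only the Hilbert--Samuel bookkeeping has to be arranged.

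Next I would set up the approximation. Part (a) on its own is the Cohen--Macaulay case of \cite[Theorem 7.3]{JP}: a Cohen--Macaulay analytic germ has arbitrarily high‑order Nash approximations that stay Cohen--Macaulay, preserve the Hilbert--Samuel function, and preserve the number $s$ of generators. I would run that construction simultaneously for the flag $I\subseteq I+J$. Choose coordinates on $\K^n$ generic at once for $I$ and for $I+J$ (finitely many Zariski‑open conditions), and compute a standard basis of $I$ from the generators $F_1,\dots,F_s$ and a standard basis of $I+J$ from the generators $F_1,\dots,F_s,\phi_1,\dots,\phi_m$; the two resulting diagrams of initial exponents determine $H_I$ and $H_{I+J}$ respectively. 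Now encode ``being an ideal with the first prescribed diagram, generated by $F_1,\dots,F_s$'' and ``being an ideal with the second prescribed diagram, generated by $F_1,\dots,F_s,\phi_1,\dots,\phi_m$'' as the vanishing of one finite polynomial system over $\K\langle x\rangle$ in the undetermined coefficients of the two standard bases, together with the relations expressing that each tuple is a standard basis with the prescribed leading exponents and that the two tuples share the block $G_1,\dots,G_s$ corresponding to $F_1,\dots,F_s$ --- exactly as in \cite{JP}; the given analytic data is a solution of this system. By N\'eron--Popescu desingularization, as used throughout \cite{JP}, the system has a solution in $\K\langle x\rangle$ agreeing with the analytic one to any prescribed order $\mu$. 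Let $G_1,\dots,G_s$ and $\psi_1,\dots,\psi_m$ be the components of this solution corresponding to $F_1,\dots,F_s$ and $\phi_1,\dots,\phi_m$. Then $\hat I=(G_1,\dots,G_s)$ and $\hat I+\hat J=(G_1,\dots,G_s,\psi_1,\dots,\psi_m)$ realize the two prescribed diagrams, so $H_{\hat I}=H_I$ and $H_{\hat I+\hat J}=H_{I+J}$, and $\K\{x\}/\hat I$ is Cohen--Macaulay by the same mechanism as in the one‑ideal case of \cite[Theorem 7.3]{JP}.

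Finally I would assemble the conclusion, with $\mu_0$ taken to be the maximum of the threshold furnished by Theorem \ref{thm:cmfindet} for $\phi$ and the thresholds produced by the approximation construction: for every $\mu\ge\mu_0$, the germ $\hat X_0$ with $\OO_{\hat X,0}=\K\{x\}/\hat I$ is the Nash Cohen--Macaulay germ required in (a), and $\psi=(\psi_1,\dots,\psi_m)\colon\hat X\to\K^m$ is a Nash map with $j^{\mu}\psi_i=j^{\mu}\phi_i$ for all $i$, flat at $0$ by the first step, and with $H_{\hat I+\hat J}=H_{I+J}$, which gives (b). The hard part will be the nested, simultaneous character of the construction: one must pick coordinates generic for $I$ and for $I+J$ at the same time, and, more delicately, arrange a \emph{single} polynomial system whose solution carries both diagrams of initial exponents as well as the Cohen--Macaulay structure of $\K\{x\}/\hat I$, with the two generating tuples forced into the shared block form $(G_1,\dots,G_s)$. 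Verifying that the original analytic data genuinely solves this combined system --- so that the N\'eron--Popescu approximation is applicable --- is the technical core, and it is there that the arguments of \cite{JP} must be pushed a little beyond their stated form.
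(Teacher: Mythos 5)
Your proposal follows essentially the same route as the paper: choose coordinates generic simultaneously for $I$ and $I+J$, encode standard bases of both ideals (sharing the block $G_1,\dots,G_s$) via Becker's s-series criterion in a single polynomial system, apply Artin-type approximation to get Nash solutions of high contact, deduce equality of the diagrams of initial exponents (hence of the Hilbert--Samuel functions and the Cohen--Macaulay property), and recover flatness of $\psi$ from the dimension of the special fibre of the Cohen--Macaulay germ $\hat{X}_0$. The only cosmetic difference is that the paper invokes Artin's approximation theorem \cite[Theorem 1.10]{Ar2} rather than N\'eron--Popescu, and packages the flatness criterion as \cite[Theorem B.8.11]{GLS} rather than the system-of-parameters argument; these are interchangeable.
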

Since the germ of special fibre $\phi^{-1}(0)_0$ (resp. $\psi^{-1}(0)_0$) is defined by the ideal $I + J$ (resp. $\hat{I} + \hat{J}$) in $\K \{ x\}$, 
the above implies that the Hilbert-Samuel function of the special fibre of $\phi$ and its approximation $\psi$ coincide. 

It is also shown that in the case when $\K = \C$, the preservation of the Hilbert-Samuel function 
in the approximation results of this paper implies the preservation of Whitney's tangent cone 
(Corollaries \ref{cor:cicones} and \ref{cor:cmcones}). 
\subsection{Structure of Paper}
\label{sec:paper-struct}

For the proofs of Theorems \ref{thm:cmfindet} and  
\ref{thm:cmapprox} and their Corollaries \ref{cor:cicones} and \ref{cor:cmcones},
four main tools are used. 
These are:
\begin{itemize}
	\item[(i)] Hironaka's Diagram of Initial Exponents, which is a combinatorial object 
		related to an ideal in a convergent power series ring (Section \ref{sec:diag}).
		The diagram of initial exponents of an ideal completely determines its Hilbert-Samuel
		function (Lemma \ref{lem:HS-diagram-complement}). Stability of the diagram, 
		in an appropriate sense, is equivalent to flatness  
		(Propositions \ref{prop:hirflat2} and \ref{prop:flatness-diagram}). 
	\item[(ii)] Becker's notion of standard bases for ideals in power series rings, and
		Becker's s-series criterion \cite{Be1, Be2}, which is a criterion 
		analogous to 
		Buchberger's criterion for Gr\"{o}bner bases of ideals in polynomial rings (Section 
		\ref{subsec:sb}). This is used to encode the shape of the diagram of 
		initial exponents in a system of polynomial equations in the proof of Theorem \ref{thm:cmapprox}. 
	\item[(iii)] Artin's Approximation theorem \cite[Theorem 1.10]{Ar2}. This is 
		used in the proof of Theorem \ref{thm:cmapprox} to obtain the 
		required Nash approximations. 
	\item[(iv)] The fact that the quotient of a Cohen-Macaulay ring by a regular sequence 
		is Cohen-Macaulay \cite[Corollary B.8.3]{GLS} and a well known flatness 
		criterion \cite[Theorem B.8.11]{GLS} that relates the 
		flatness of a map from a Cohen-Macaulay germ to the dimension of the 
		special fibre. 
	\end{itemize}
This paper begins by presenting definitions and results related to tools (i) and (ii) that 
will be needed in the proofs that come later.
After developing the tools, the proofs of Theorems \ref{thm:cmfindet}, 
\ref{thm:ciapprox}, \ref{thm:cmapprox} and their corollaries are 
presented. 
Throughout this paper the germ of a real or complex analytic space is referred to as 
an \emph{analytic germ}. 

\subsection*{Acknowledgement} The author would like to acknowledge the 
feedback of Janusz Adamus, which helped greatly in improving the quality of the 
writing in this paper. 

\section{Diagrams of initial exponents and flatness}
\label{sec:diag}

Let $x = (x_1, \dots, x_n)$ and $y = (y_1, \dots, y_m)$. Further, let $A = \K\{y\}$ or $\K$. 
A function on $\K^n$, $\LL(\beta)=\sum_{i=1}^n\lambda_i\beta_i$, for some $\lambda_i>0$,  
is called a  \emph{positive linear form} on $\K^n$. For each such $\LL$
the lexicographic ordering of the $n+1$-tuples $(\LL(\beta), \beta_1, \dots, \beta_n)$ 
defines a total ordering on $\N^n$. We call this the \emph{ordering induced by $\LL$}.  
A power series $F \in A\{x\}$ may be written as $F = \sum_{\beta \in \N^n} f_{\beta} x^{\beta}$ 
where $x^{\beta} = x_1^{\beta_1} \dots x_n^{\beta_n}$. The set $\supp(F) = \{ \beta \in \N^n : f_{\beta} \neq 0\}$ is 
called the \emph{support} of $F$.  
The \emph{initial exponent of $F$ with 
respect to $\LL$} is $\inexp_{\LL}(F) = \min_{\LL} \{ \beta\in \supp(F)\}$, where the minimum is taken 
with respect to ordering induced by $\LL$.  Given an ideal $I \in A\{x\}$, 
\begin{equation}
	\NN_{\LL}(I) = \{\inexp_{\LL}(F) : F \in I \setminus \{0 \}\},
\end{equation}
is called the \emph{diagram of initial exponents} of $I$ relative to $\LL$. When
$\LL(\beta) = |\beta| = \beta_1 + \dots + \beta_n$, the subscript is 
suppressed and the diagram of initial exponents is denoted simply by $\NN(I)$. 

If $A = \K\{y\}$, the mapping $ A \ni F(y) \mapsto F(0) \in \K$ of the 
evaluation of the $y$ variables at $0$ induces an evaluation mapping, 
\begin{equation*}
	A\{x\} \ni F = \sum_{\beta \in \N^n} f_{\beta} (y) x^{\beta} \mapsto F(0) = \sum_{\beta \in \N^n} 
	f_{\beta} (0) x^{\beta} \in \K\{x\}. 
\end{equation*}
(In case $A=\K$, this is just the identity mapping). For an ideal $I \subseteq A\{x\}$, 
$I(0) = \{F(0) : F \in I\} \subseteq \K\{x\}$ is called the \emph{evaluated ideal}. 

\begin{remark}
	\label{rem:vertices}
	For every ideal $I$ in $A\{x\}$ and for every
	positive linear form $\LL$, there exists a unique smallest (finite) set
	$\verts_{\LL}(I)\subset\NN_\LL(I)$ such that $\verts_{\LL}(I)+\N^k=\NN_{\LL}(I)$ (see,
	e.g., \cite[Lemma 3.8]{BM1}). The elements of $\verts_{\LL}(I)$ are called the
	\emph{vertices} of the diagram $\NN_\LL(I)$. When $\LL(\beta) = |\beta|$, the 
	subscript is suppressed and the set of vertices is denoted by $\verts (I)$. 
\end{remark}

For a positive linear form $\LL$ and $\mu \in \N$, let $\nn_{\LL,\mu}$ be the ideal in
$\K\{x\}$ generated by all the monomials $x^\beta=x_1^{\beta_1}\dots
x_n^{\beta_n}$ with $\LL(\beta)\geq\mu$. (Note that, by positivity of the linear
form $\LL$, the ideals $\nn_{\LL,\mu}$ are $\mm$-primary for every $\mu$.
Moreover, for $\LL(\beta)=|\beta|$ we have $\nn_{\LL,\mu}=\mm^\mu$.)
By the following lemma, diagram of initial exponents of an ideal determines its Hilbert-Samuel function. 
\begin{lemma}{\cite[Lemma 6.2]{JP}}
	\label{lem:HS-diagram-complement}
	Let $\lambda_1,\dots,\lambda_n>0$ be arbitrary, and let
	$\LL(\beta)=\sum_{j=1}^n\lambda_j\beta_j$. Then, for any ideal $I$ in
	$\K\{x\}$ and for every $\eta\geq1$,
	\[
		\#\{\beta\in\N^n\setminus\NN_\LL(I):\LL(\beta)\leq\eta\}\ =\ \dim_\K\frac{\K\{x\}}{I+\nn_{\LL,\eta+1}}\,,
	\]
	where the dimension on the right side is in the sense of $\K$-vector spaces.
	In particular, the Hilbert-Samuel function $H_I$ of \,$\K\{x\}/I$ satisfies
	\[
		H_I(\eta)=\#\{\beta\in\N^n\setminus\NN(I):|\beta|\leq\eta\},\quad\mathrm{for\ all\ }\eta\geq1\,.
	\]
\end{lemma}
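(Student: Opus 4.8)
The plan is to derive the displayed equality from the Grauert--Hironaka formal division theorem, applied to the auxiliary $\mm$-primary ideal $J:=I+\nn_{\LL,\eta+1}$, together with an explicit computation of its diagram $\NN_\LL(J)$. Since $\LL$ is a positive linear form, the ordering it induces on $\N^n$ refines the partial ordering by $\LL$-value and is a term ordering of the kind for which division with remainder in $\K\{x\}$ is available (this is the set-up underlying the diagram machinery, cf.\ \cite{BM1}). Division then shows that every $F\in\K\{x\}$ can be written uniquely as $F=G+R$ with $G\in J$ and $\supp(R)\subseteq\N^n\setminus\NN_\LL(J)$; equivalently $\K\{x\}=J\oplus\operatorname{span}_\K\{x^\beta:\beta\in\N^n\setminus\NN_\LL(J)\}$. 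Because $\nn_{\LL,\eta+1}$ is $\mm$-primary, so is $J$, hence $\N^n\setminus\NN_\LL(J)$ is finite and the residues of the monomials $x^\beta$, $\beta\in\N^n\setminus\NN_\LL(J)$, form a $\K$-basis of $\K\{x\}/J$. Thus
\[
\dim_\K\frac{\K\{x\}}{I+\nn_{\LL,\eta+1}}=\#\bigl(\N^n\setminus\NN_\LL(I+\nn_{\LL,\eta+1})\bigr),
\]
and it remains only to identify this complement.

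The heart of the argument is the identity
\[
\NN_\LL(I+\nn_{\LL,\eta+1})=\NN_\LL(I)\cup\{\beta\in\N^n:\LL(\beta)\geq\eta+1\}.
\]
The inclusion $\supseteq$ is immediate: $I\subseteq J$ and $\nn_{\LL,\eta+1}\subseteq J$, while positivity of $\LL$ gives $\NN_\LL(\nn_{\LL,\eta+1})=\{\beta:\LL(\beta)\geq\eta+1\}$ straight from the definition of $\nn_{\LL,\eta+1}$. For $\subseteq$, take any $F\in J\setminus\{0\}$, write $F=P+Q$ with $P\in I$ and $Q\in\nn_{\LL,\eta+1}$, and set $\delta=\inexp_\LL(F)$. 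If $\LL(\delta)\geq\eta+1$ there is nothing to prove; otherwise $\LL(\delta)\leq\eta$, and I claim $\inexp_\LL(P)=\delta$. Indeed, every monomial occurring in $Q$ has $\LL$-value at least $\eta+1$, whereas every $\beta$ with $\beta\leq_\LL\delta$ satisfies $\LL(\beta)\leq\LL(\delta)\leq\eta$ (here one uses that the ordering induced by $\LL$ compares $\LL$-values first); so $Q$ involves no monomial of $\LL$-value $\leq\LL(\delta)$, and therefore the coefficient of $x^\beta$ in $P$ equals that in $F$ for every $\beta\leq_\LL\delta$. Since $\delta=\inexp_\LL(F)$, this forces $\inexp_\LL(P)=\delta$, so $\delta\in\NN_\LL(I)$, as required.

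Granting the identity, $\N^n\setminus\NN_\LL(I+\nn_{\LL,\eta+1})=\{\beta\in\N^n\setminus\NN_\LL(I):\LL(\beta)\leq\eta\}$, a finite set by positivity of $\LL$, and combining with the first paragraph yields the asserted equality of cardinalities. The statement about the Hilbert--Samuel function is then the special case $\LL(\beta)=|\beta|$: there $\nn_{\LL,\eta+1}=\mm^{\eta+1}$ and $\NN_\LL(I)=\NN(I)$, so for every $\eta\geq1$,
\[
H_I(\eta)=\dim_\K\frac{\K\{x\}}{I+\mm^{\eta+1}}=\#\{\beta\in\N^n\setminus\NN(I):|\beta|\leq\eta\}.
\]

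The step I expect to require the most care is the inclusion $\subseteq$ in the diagram identity, namely ruling out that cancellation between an element of $I$ and an element of $\nn_{\LL,\eta+1}$ produces a member of $J$ whose initial exponent escapes $\NN_\LL(I)$; the decisive observation, isolated above, is that every monomial contributed by $\nn_{\LL,\eta+1}$ is strictly larger, in the ordering induced by $\LL$, than any exponent of $\LL$-value at most $\eta$, so it can never occupy or undercut the slot of a prospective initial exponent. A minor point worth a word is that the division theorem is being invoked in the convergent category; this is harmless, because $J$ is $\mm$-primary, so $\K\{x\}/J$ is finite dimensional and coincides with the corresponding quotient of the formal power series ring.
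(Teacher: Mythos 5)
Your argument is correct and is essentially the standard proof of this fact (the paper itself does not prove the lemma but cites \cite[Lemma 6.2]{JP}, whose argument likewise rests on Hironaka division and the identification of $\NN_\LL(I+\nn_{\LL,\eta+1})$ with $\NN_\LL(I)\cup\{\beta:\LL(\beta)\geq\eta+1\}$). In particular, your key step --- that a monomial of $\nn_{\LL,\eta+1}$ can never undercut an initial exponent of $\LL$-value at most $\eta$ because the induced ordering compares $\LL$-values first --- is exactly the right observation, and the rest follows from the division theorem as you say.
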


The following proposition proved in \cite{JP}, which relates the 
diagram of $I$ to the dimension of $\K\{x\}/I$, will be 
used in the proofs of Theorem \ref{thm:cmfindet} and Theorem \ref{thm:cmapprox}. 
\begin{proposition}{\cite[Proposition 7.1]{JP}}
\label{prop:diagram-dim}
For an ideal $I$ in $\K\{x\}$, the following conditions are equivalent:
\begin{itemize}
\item[(i)] $\dim(\K\{x\}/I)\leq\dim\K\{x\}-k$.
\item[(ii)] After a generic linear change of coordinates in $\K^n$, the diagram $\NN(I)$ has a vertex on each of the first $k$ coordinate axes in $\N^n$.
\end{itemize}
\end{proposition}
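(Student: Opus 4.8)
The plan is to reduce the dimension statement to a statement about the monomial ideal carried by the diagram, and then, for the nontrivial implication, to place one vertex on each of the $k$ axes by an iterated Weierstrass preparation after a generic, suitably adapted, linear change of coordinates.

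I would begin with the formal reduction. Write $\mathrm{in}(I)$ for the monomial ideal $(x^\beta:\beta\in\NN(I))$. By Lemma~\ref{lem:HS-diagram-complement} the ideals $I$ and $\mathrm{in}(I)$ have the same Hilbert--Samuel function, so, the Krull dimension of a Noetherian local ring being the degree of its Hilbert--Samuel polynomial, $\dim(\K\{x\}/I)=\dim(\K\{x\}/\mathrm{in}(I))$; and $\dim(\K\{x\}/I)$ is unchanged by a linear change of coordinates in $\K^n$. Granting this, (ii)$\Rightarrow$(i) is immediate: in a generic coordinate system for which $\NN(I)$ has a vertex on each of the first $k$ axes we have $x_1^{N_1},\dots,x_k^{N_k}\in\mathrm{in}(I)$ for suitable $N_i\ge 1$, hence $\dim(\K\{x\}/\mathrm{in}(I))\le\dim\bigl(\K\{x\}/(x_1^{N_1},\dots,x_k^{N_k})\bigr)=n-k$, so $\dim(\K\{x\}/I)\le n-k$.

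For (i)$\Rightarrow$(ii) put $d:=\dim(\K\{x\}/I)\le n-k$. The input from the dimension hypothesis is an elimination statement: for a coordinate subring $\K\{x_{i_1},\dots,x_{i_r}\}$ with $r>d$, the inclusion into $\K\{x\}/I$ cannot be injective, since its source is an analytic domain of dimension $r>d=\dim(\K\{x\}/I)$; hence $I$ meets that subring nontrivially. Because $d\le n-k$ this applies to the $k$ coordinate subrings in $n,n-1,\dots,n-k+1$ of the variables. I would then choose, by induction on the $k$ axes, a generic linear change of coordinates that brings a chosen nonzero element of the appropriate elimination ideal into Weierstrass position in the corresponding variable while respecting the coordinate subrings already used; Weierstrass preparation then replaces that element by a Weierstrass polynomial $W=x_i^{M}+c_1x_i^{M-1}+\dots+c_M\in I$ whose coefficients $c_\ell$ lie in the smaller subring and (because multiplication by a unit preserves the order) satisfy $\operatorname{ord}(c_\ell)\ge\ell$, so that every monomial of $W$ of degree $M$ other than $x_i^M$ involves one of the variables of that smaller subring.

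The remaining point is a check, directly from the ordering induced by $\LL(\beta)=|\beta|$, that for such a $W$ one has $\inexp_\LL(W)=Me_i$; then $Me_i\in\NN(I)$, and since $\NN(I)+\N^n=\NN(I)$ this produces, via Remark~\ref{rem:vertices}, a vertex of $\NN(I)$ on the $i$-th axis. Doing this for the $k$ axes gives (ii). I expect the real work to be concentrated in the coordinate bookkeeping of the previous paragraph: one must make the generic linear change of coordinates compatible both with the chain of coordinate subrings that supply the Weierstrass polynomials and with the monomial ordering, and verify that treating one axis does not disturb the pure powers already placed on the others — that is, that a generic choice of coordinates can be made ``adapted to the diagram'' in the required sense. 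The dimension hypothesis $d\le n-k$ is used only to guarantee that the relevant elimination ideals are nonzero. (Alternatively, the same content can be obtained by passing to the generic initial ideal of $I$ for the ordering induced by $|\beta|$ and invoking its Borel fixedness, which forces the pure powers in it to occupy an initial segment of the axes of length $n-d$; I would, however, favour the direct Weierstrass argument here, since it uses only tools already in place.)
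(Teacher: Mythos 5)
This proposition is quoted from \cite{JP} and the paper contains no proof of it, so there is nothing internal to compare your argument against; I can only assess it on its own terms. Your outline is the standard (and, I believe, the intended) one: reduce to the monomial ideal of the diagram via Lemma~\ref{lem:HS-diagram-complement} and the identification of Krull dimension with the degree of the Hilbert--Samuel polynomial, which disposes of (ii)$\Rightarrow$(i); and for (i)$\Rightarrow$(ii) use the fact that an injective local morphism of analytic algebras cannot decrease dimension to get $I\cap\K\{x_i,\dots,x_n\}\neq 0$ for $i=1,\dots,k$, then apply Weierstrass preparation to obtain $W_i=x_i^{M_i}+c_{i,1}x_i^{M_i-1}+\dots+c_{i,M_i}\in I$ with $c_{i,\ell}\in\K\{x_{i+1},\dots,x_n\}$ and $\mathrm{ord}(c_{i,\ell})\geq\ell$. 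The coordinate bookkeeping you defer is exactly the proof of Noether normalization for analytic algebras (a single generic linear change works for all $k$ steps simultaneously), so that part is fine.

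The one step that does not survive as literally written is the ``direct check'' that $\inexp(W_i)=M_ie_i$. Every degree-$M_i$ monomial of $W_i$ other than $x_i^{M_i}$ has strictly smaller $i$-th exponent and involves some $x_j$ with $j>i$; under the ordering declared at the start of Section~\ref{sec:diag} (minimum for the lexicographic order on $(|\beta|,\beta_1,\dots,\beta_n)$) any such monomial is \emph{smaller} than $x_i^{M_i}$, so $\inexp(W_i)$ is a pure power of $x_i$ only when $\ins(W_i)=x_i^{M_i}$ exactly --- which a generic linear change does not arrange. Concretely, for $I=(x_1^2+x_2^2+x_3^3)\subset\C\{x_1,x_2,x_3\}$ no element of $I$ has initial form a pure power of a generic linear form, so under that convention $\NN(I)$ has no vertex on the first axis even though $\dim=n-1$; the proposition is only true for a tie-breaking rule that makes exponents supported on the \emph{early} variables small, i.e.\ the ordering of Section~\ref{subsec:sb}, lexicographic on $(|\beta|,\beta_n,\dots,\beta_1)$, under which $x_i^{M_i}$ is indeed the least monomial of its degree among those involving only $x_i,\dots,x_n$, and $M_ie_i\in\NN(I)$ then forces a vertex on the $i$-th axis as you say. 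So: fix the convention explicitly (the two given in the paper disagree, and only one makes both the statement and your check true), and your proof goes through.
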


\begin{remark}
\label{rem:CM-flat}
	Let $I$ be a proper ideal in $\K\{x\}$ with $\dim\K\{x\}/I=n-k$, and suppose 
	that $\K\{x\}/I$ is a finite $\K\{\tx\}$-module, where $\tx=(x_{k+1},\dots,x_n)$. 
	Then, $\K\{x\}/I$ is Cohen-Macaulay if and only if it is a flat $\K\{\tx\}$-module.
	This follows from the flatness criterion \cite[Theorem B.8.11]{GLS}.
\end{remark}

\subsection{Diagrams and flatness}
\label{subsec:diagflat}

For an ideal $I \in A\{x\}$, let $\Delta = \N^n \setminus \NN(I(0))$, and 
$A\{x\}^{\Delta} = \{F \in A\{x\} : \supp(F) \subset \Delta\}$. Further, 
let $\kappa$ be the restriction of the canonical projection $A\{x\} \rightarrow
A \{x\}/I$ to $A\{x\}^{\Delta}$. The following flatness criterion due to 
Hironaka will be used in the proof of Theorem \ref{thm:cmfindet}. 

\begin{proposition}{\cite[\S6, Proposition 10]{H}}
	\label{prop:hirflat2}
	The ring $A\{x\}/I$ is flat as an $A$-module if and only if $\kappa$ is 
	bijective. 
\end{proposition}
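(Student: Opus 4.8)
The plan is to prove the two implications separately, reducing at once to $A=\K\{y\}$: when $A=\K$ one has $I(0)=I$, the ring $\K\{x\}/I$ is automatically $\K$-flat, and $\kappa$ is a bijection by Hironaka's division theorem for $I$ (as $\Delta=\N^{n}\setminus\NN(I)$), so both conditions hold. So let $A=\K\{y\}$, with maximal ideal $\mathfrak{m}_{A}$, so that $A$ is Noetherian local with $A/\mathfrak{m}_{A}=\K$, and put $M:=A\{x\}/I$. I will use that $A\{x\}$ is $A$-flat — it is $\OO_{\K^{n+m},0}$ pulled back along the coordinate projection $\K^{n+m}\to\K^{m}$, a flat map — and that $A\{x\}\otimes_{A}\K=\K\{x\}$ (the special fibre of that projection). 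Since $\N^{n}=\Delta\sqcup\NN(I(0))$, re-summing a convergent power series along the two index sets gives an $A$-module splitting $A\{x\}=A\{x\}^{\Delta}\oplus A\{x\}^{\NN(I(0))}$; hence $A\{x\}^{\Delta}$ is $A$-flat as a direct summand of a flat module, and base-changing this splitting and comparing it with $\K\{x\}=\K\{x\}^{\Delta}\oplus\K\{x\}^{\NN(I(0))}$ identifies $A\{x\}^{\Delta}\otimes_{A}\K=\K\{x\}^{\Delta}$ and $M\otimes_{A}\K=\K\{x\}/I(0)$. In particular, if $\kappa$ is bijective then $M\cong A\{x\}^{\Delta}$ is $A$-flat, which is one of the two implications.

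For the converse I would first establish that $\kappa$ is \emph{always} surjective, with no flatness hypothesis — this is the relative version of Hironaka's division theorem. One picks a reduced standard basis $G_{1},\dots,G_{t}$ of $I(0)$ for the order induced by $\LL(\beta)=|\beta|$ and lifts each $G_{i}$ to some $\tilde{G}_{i}\in I$ with $\tilde{G}_{i}(0)=G_{i}$; then the coefficient of $\tilde{G}_{i}$ at the vertex $\inexp(G_{i})$ takes the value $1$ at $y=0$, hence is a unit of $A$. Dividing an arbitrary $F\in A\{x\}$ by $\tilde{G}_{1},\dots,\tilde{G}_{t}$ in the usual way rewrites, at each step, a monomial of $\NN(I(0))$ in terms of other monomials with coefficients in $A$, and one must check that the process converges to a remainder $R\in A\{x\}^{\Delta}$ with $F\equiv R\pmod{I}$. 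This convergence rests on the joint filtration by $\LL$-degree and by powers of $\mathfrak{m}_{A}$ — a reduction step that lowers the $\LL$-degree picks up a factor in $\mathfrak{m}_{A}$ — together with the usual majorant estimates, and I expect it to be the main technical obstacle of the argument. Granting it, $A\{x\}=A\{x\}^{\Delta}+I$, i.e.\ $\kappa$ is surjective.

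It then remains to show that flatness of $M$ forces $\kappa$ to be injective. By surjectivity there is a short exact sequence $0\to K\to A\{x\}^{\Delta}\xrightarrow{\kappa}M\to 0$ of $A$-modules, with $K=\ker\kappa=A\{x\}^{\Delta}\cap I$. Under the identifications of the first paragraph, $\kappa\otimes_{A}\K$ is the canonical map $\K\{x\}^{\Delta}\to\K\{x\}/I(0)$, which is injective by the uniqueness in Hironaka's division theorem for $I(0)$ (since $\Delta=\N^{n}\setminus\NN(I(0))$ forces $\K\{x\}^{\Delta}\cap I(0)=0$). Applying $-\otimes_{A}\K$ to the short exact sequence and using $\operatorname{Tor}_{1}^{A}(M,\K)=0$ (flatness of $M$) yields an exact sequence $0\to K\otimes_{A}\K\to A\{x\}^{\Delta}\otimes_{A}\K\xrightarrow{\kappa\otimes\K}M\otimes_{A}\K\to 0$; since $\kappa\otimes_{A}\K$ is injective, $K\otimes_{A}\K=0$, that is $K=\mathfrak{m}_{A}K$. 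Iterating, $K=\mathfrak{m}_{A}^{d}K\subseteq(\mathfrak{m}_{A}A\{x\})^{d}$ for every $d\ge1$, whence $K\subseteq\bigcap_{d\ge1}(\mathfrak{m}_{A}A\{x\})^{d}=0$ by Krull's intersection theorem in the Noetherian local ring $A\{x\}$ (note $\mathfrak{m}_{A}A\{x\}$ lies in its maximal ideal). Thus $K=0$, and with the surjectivity from the previous paragraph, $\kappa$ is bijective, completing the proof.
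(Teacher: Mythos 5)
The paper offers no proof of this proposition at all---it is quoted as a black box from Hironaka \cite[\S6, Proposition 10]{H}---so there is no in-paper argument to compare yours against, and I judge it on its own terms. What you write is the standard derivation of Hironaka's flatness criterion from the relative division theorem, and every step you actually carry out is correct: the $A$-module splitting $A\{x\}=A\{x\}^{\Delta}\oplus A\{x\}^{\NN(I(0))}$ by splitting supports (legitimate, since sub-sums of convergent series converge); flatness of $A\{x\}^{\Delta}$ as a direct summand of the flat $A$-module $A\{x\}=\K\{x,y\}$; the identifications $A\{x\}^{\Delta}\otimes_{A}\K=\K\{x\}^{\Delta}$ and $M\otimes_{A}\K=\K\{x\}/I(0)$ (the point being that $\mathfrak{m}_{A}A\{x\}\cap A\{x\}^{\Delta}=\mathfrak{m}_{A}A\{x\}^{\Delta}$, which your splitting delivers); the injectivity of $\K\{x\}^{\Delta}\to\K\{x\}/I(0)$, since a nonzero element of $I(0)$ has its initial exponent, hence part of its support, inside $\NN(I(0))$; and the Tor plus Krull-intersection argument, which correctly replaces Nakayama for the non--finitely-generated kernel $K$. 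The single unproved ingredient is the unconditional surjectivity of $\kappa$, i.e.\ the division theorem with coefficients in $A=\K\{y\}$: you correctly locate the difficulty in the convergence of the division process (a reduction that lowers $\LL$-degree costs a factor in $\mathfrak{m}_{A}$, so one needs the double filtration together with majorant estimates), but writing ``granting it'' leaves the proof conditional on precisely the result that makes the proposition nontrivial. Since that ingredient is itself a standard citable theorem (the relative form of \cite[Theorems 3.1 and 3.4]{BM1}, which the paper already invokes elsewhere), your argument stands as a correct and complete reduction of the flatness criterion to the relative division theorem; to make it self-contained you would have to either cite that theorem explicitly or supply the majorant estimate.
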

The following proposition will also be used in the proof of Theorem \ref{thm:cmapprox}.  
\begin{proposition}{\cite[Proposition 2.5]{JP}}
	\label{prop:flatness-diagram}
	Let $I$ be an ideal in $\K\{x\}$, let $1\leq k<n$, and let $\tx$ denote
	the variables $(x_{k+1},\dots,x_n)$.
	\begin{itemize}
		\item[(i)] If there exist a positive linear form $\LL$ on $\K^n$
			and a set $D\subset\N^k$ such that
			$\NN_\LL(I)=D\times\N^{n-k}$\!, then $\K\{x\}/I$ is a
			flat $\K\{\tx\}$-module.
		\item[(ii)] If $\K\{x\}/I$ is a flat $\K\{\tx\}$-module, then
			there exist $l_0\in\N$ and a set $D\subset\N^k$ such
			that, for all $l\geq l_0$, the diagram $\NN_\LL(I)$ with
			respect to the linear form
			$\displaystyle{\LL(\beta)=\sum_{i=1}^k\beta_i+\sum_{j=k+1}^n\!l\beta_j}$
			satisfies $\NN_\LL(I)=D\times\N^{n-k}$.
	\end{itemize}
\end{proposition}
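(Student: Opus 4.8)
The plan is to route everything through Hironaka's division theorem, using the identification $\K\{x\}=A\{\hat x\}$ with $A=\K\{\tx\}=\K\{x_{k+1},\dots,x_n\}$ and $\hat x=(x_1,\dots,x_k)$. For $\Delta\subseteq\N^k$ write $A\{\hat x\}^\Delta$ for the set of $F\in\K\{x\}$ with $\supp(F)\subseteq\Delta\times\N^{n-k}$. I will use two preliminary observations: (1) splitting a convergent power series according to whether its $\hat x$-exponent lies in $\Delta$ makes $A\{\hat x\}^\Delta$ a direct $A$-module summand of $\K\{x\}$, so, since $\K\{x\}$ is flat over $A$, each $A\{\hat x\}^\Delta$ is a flat $A$-module; and (2) the division theorem with respect to the ordering induced by a positive linear form: given $h_1,\dots,h_p$ with unit leading coefficients and pairwise distinct leading exponents $\gamma^{(1)},\dots,\gamma^{(p)}$, every $F$ has a unique $F=\sum_j q_jh_j+r$ with $\supp(r)\subseteq\N^n\setminus\bigcup_j(\gamma^{(j)}+\N^n)$, in which the leading terms of the $q_jh_j$ and of $r$ sit in pairwise disjoint regions of $\N^n$ (so none of them cancels), and if the $\gamma^{(j)}$ are the vertices of $\NN_\LL(I)$ then $\{h_j\}$ is a standard basis of $I$ and $F\in I$ iff $r=0$.

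For part (i), I would argue as follows. Suppose $\NN_\LL(I)=D\times\N^{n-k}$; then $D$ is a diagram in $\N^k$, it has finitely many vertices, and the vertices of $D\times\N^{n-k}$ are exactly the $(\hat\delta,0)$ with $\hat\delta\in\verts(D)$. Pick a standard basis $h_1,\dots,h_p$ of $I$ with respect to $\LL$ with $\inexp_\LL(h_j)=(\hat\delta^{(j)},0)$ and leading coefficient $1$. Dividing by the $h_j$ produces, for every $F\in\K\{x\}$, a unique remainder $r$ with $\supp(r)\subseteq\N^n\setminus(D\times\N^{n-k})=\Delta\times\N^{n-k}$, i.e.\ $r\in A\{\hat x\}^\Delta$; this gives surjectivity of $\kappa\colon A\{\hat x\}^\Delta\to\K\{x\}/I$, and injectivity follows because any $g\in I\cap A\{\hat x\}^\Delta$ has, by uniqueness, remainder both $g$ and $0$. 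Hence $\K\{x\}/I\cong A\{\hat x\}^\Delta$, which is flat over $A=\K\{\tx\}$ by observation (1). This part is essentially formal once the division theorem is in place.

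For part (ii), starting from flatness of $\K\{x\}/I$ over $A=\K\{\tx\}$, I would set $D=\NN(I(0))$ (the diagram of the evaluated ideal obtained by setting $\tx=0$), $\Delta=\N^k\setminus D$, and $l_0=1+\max\{|\hat\beta|:\hat\beta\in\verts(I(0))\}$, and prove $\NN_{\LL_l}(I)=D\times\N^{n-k}$ for all $l\geq l_0$. First, Proposition \ref{prop:hirflat2} (with base ring $A$ and variables $\hat x$) gives that $\kappa\colon A\{\hat x\}^\Delta\to\K\{x\}/I$ is bijective, so $I\cap A\{\hat x\}^\Delta=0$. Next I would lift the special fibre: for each $\hat\beta^{(j)}\in\verts(I(0))$ choose $h_j\in I$ with $h_j|_{\tx=0}$ having initial exponent $\hat\beta^{(j)}$ and leading coefficient $1$, and check that $\inexp_{\LL_l}(h_j)=(\hat\beta^{(j)},0)$ for $l\geq l_0$ (exponents of $h_j$ with nonzero $\tx$-part have $\LL_l$-value at least $l>|\hat\beta^{(j)}|$, those with vanishing $\tx$-part have $\hat x$-part $\succeq\hat\beta^{(j)}$ in the ordering on $\N^k$ induced by $|\cdot|$). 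The inclusion $D\times\N^{n-k}\subseteq\NN_{\LL_l}(I)$ then follows from $\inexp_{\LL_l}(h_j\,\tx^{\tilde\gamma})=(\hat\beta^{(j)},\tilde\gamma)$. For the reverse inclusion I would take $F\in I\setminus\{0\}$, divide it by $h_1,\dots,h_p$ with respect to $\LL_l$ (legitimate, since their $\LL_l$-leading coefficients are units), observe that the remainder $r$ lies both in $A\{\hat x\}^\Delta$ and in $I$ (as $r=F-\sum_j q_jh_j$), hence $r=0$, and then use the no-cancellation property of the division to conclude $\inexp_{\LL_l}(F)\in(\hat\beta^{(j)},0)+\N^n\subseteq D\times\N^{n-k}$ for the relevant $j$.

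The main obstacle is the uniformity in part (ii): the constant $l_0$ must not depend on the test element $F\in I$, which is why the argument has to be channelled through the finitely many lifted generators $h_j$. It is precisely the flatness hypothesis --- via Proposition \ref{prop:hirflat2} --- that forces the standard basis of the special fibre to lift to elements of $I$ whose $\LL_l$-division remainders fall inside $A\{\hat x\}^\Delta$ and are therefore obliged to vanish; without flatness this lifting fails, and indeed $\NN_{\LL_l}(I)$ need not acquire the product shape (for instance $I=(x_{k+1})$). A secondary, purely technical, point is that I need Hironaka's division theorem for \emph{convergent} power series and for the orderings induced by the linear forms $\LL$ and $\LL_l$, not merely the standard degree ordering; this is supplied by Becker's theory of standard bases (Section \ref{subsec:sb}), after which the remaining content is the bookkeeping indicated above.
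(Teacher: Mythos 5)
Your argument is correct, and since the paper only quotes this proposition from \cite[Proposition 2.5]{JP} without reproducing a proof, there is no in-paper argument to diverge from: your route --- Hironaka division by representatives of the vertices of $\NN_\LL(I)=D\times\N^{n-k}$ to identify $\K\{x\}/I$ with the $\K\{\tx\}$-flat direct summand $\K\{\tx\}\{\hat{x}\}^{\Delta}$ for (i), and for (ii) Hironaka's flatness criterion (Proposition \ref{prop:hirflat2} applied with $A=\K\{\tx\}$) combined with lifting the vertices of $\NN(I(0))$ to elements $h_j\in I$ whose $\LL_l$-initial exponents become $(\hat\beta^{(j)},0)$ once $l>\max_j|\hat\beta^{(j)}|$, followed by division of an arbitrary $F\in I$ and vanishing of the remainder --- is exactly the standard proof of the cited result and fits the toolkit this paper already assumes. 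The only cosmetic point is that to obtain the full inclusion $D\times\N^{n-k}\subseteq\NN_{\LL_l}(I)$ you should use either the monomial multiples $\hat{x}^{\hat\delta}\tx^{\tilde\gamma}h_j$ or, more simply, the stability $\NN_{\LL_l}(I)+\N^n=\NN_{\LL_l}(I)$ applied to $\inexp_{\LL_l}(h_j)=(\hat\beta^{(j)},0)$, rather than only the $\tx^{\tilde\gamma}$-multiples you mention; this is an immediate adjustment and not a gap.
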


\subsection{Approximation of Diagrams}
Let $\Lambda(\beta) = \sum_{i = 1}^{n} \lambda_i \beta_i$ be a positive linear form on $\K^n$. 
The notion of jets introduced in Section \ref{sec:intro} can be generalized as
follows: For a natural number $\mu\in\N$ and a power series $F\in\K\{x\}$, the
\emph{$\mu$-jet of $F$ with respect to $\LL$}, denoted $j_\LL^\mu(F)$, is the
image of $F$ under the canonical epimorphism
$\K\{x\}\to\K\{x\}/\nn_{\LL,\mu+1}$, where $\nn_{\LL, \mu+1}$ is defined as in 
the beginning of Section \ref{sec:diag}. 
Note that the notion of $\mu$-jets defined in Section \ref{sec:intro} are obtained by 
applying this definition with
$\LL(\beta)=|\beta|=\beta_1+\dots+\beta_n$. The following lemma relates 
the diagram of an ideal to its approximation. 

For an ideal $I = (F_1, \dots, F_k) \subseteq \K\{x\}$, a positive linear 
form $\Lambda$ and $\mu \geq 1$, let
\begin{equation*}
	U_{\Lambda}^{\mu}(I) = \{(G_1, \dots, G_k) \subseteq \K\{x\} : 
		j^{\mu}_{\Lambda} (G_i) = j^{\mu}_{\Lambda} (F_i), 1\leq i \leq k\}  
\end{equation*}

\begin{lemma}{\cite[Lemma 6.5]{JP}}
	\label{lem:diagram-up-to-l}
	Let $I$ be an ideal in $\K\{x\}$ and let $\LL$ be a positive linear form on $\K^n$. Let $l_0=\max\{\LL(\beta^i):1\leq i\leq t\}$, where $\beta^1,\dots,\beta^t$ are the vertices of the diagram $\NN_\LL(I)$. Then:
	\begin{itemize}
		\item[(i)] For every $\mu\geq l_0$ and $I_\mu\in U_\LL^\mu(I)$, we have $\NN_\LL(I_\mu)\supset\NN_\LL(I)$.
		\item[(ii)] Given $l\geq l_0$, for every $\mu\geq l$ and $I_\mu\in U_\LL^\mu(I)$, we have
		      \[
		      	\NN_\LL(I_\mu)\cap\{\beta\in\N^n:\LL(\beta)\leq l\}=\NN_\LL(I)\cap\{\beta\in\N^n:\LL(\beta)\leq l\}\,.
		      \]
	\end{itemize}
\end{lemma}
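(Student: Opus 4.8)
The plan is to exploit the fact that, under an ordering induced by a positive linear form $\LL$, an ideal generated by $F_1,\dots,F_t$ admits a \emph{standard basis}, and the diagram $\NN_\LL(I)$ is determined by the initial exponents of such a basis together with the syzygy (s-series) reductions that witness membership. The key observation is that all the data needed to certify that a given $\beta$ lies in (or outside of) $\NN_\LL(I)$ below a fixed $\LL$-level $l$ involves only finitely many coefficients of finitely many power series, and those coefficients are the ones sitting at $\LL$-levels $\leq l$. So perturbing the generators at $\LL$-levels strictly above $l$ cannot change the part of the diagram below level $l$.

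For part (i), I would argue as follows. Let $\beta^1,\dots,\beta^t$ be the vertices of $\NN_\LL(I)$ and for each $i$ pick $H_i\in I$ with $\inexp_\LL(H_i)=\beta^i$; write each $H_i=\sum_j a_{ij} F_j$ with $a_{ij}\in\K\{x\}$. Fix $\mu\geq l_0$ and $I_\mu\in U_\LL^\mu(I)$ with generators $G_1,\dots,G_t$ satisfying $j^\mu_\LL(G_j)=j^\mu_\LL(F_j)$. Form $\widetilde H_i=\sum_j a_{ij} G_j\in I_\mu$. Since $F_j-G_j$ has all of its support at $\LL$-levels $\geq\mu+1\geq l_0+1>\LL(\beta^i)$, and $\LL$-levels only go up under multiplication by $a_{ij}$, the difference $H_i-\widetilde H_i$ has support strictly above $\LL(\beta^i)$; hence $\inexp_\LL(\widetilde H_i)=\inexp_\LL(H_i)=\beta^i$ (here I use that the ordering induced by $\LL$ refines the $\LL$-level, so the $\LL$-minimal term cannot be cancelled or displaced by higher-level terms). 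Thus $\beta^i\in\NN_\LL(I_\mu)$ for every vertex $\beta^i$, and since $\NN_\LL(I_\mu)+\N^n=\NN_\LL(I_\mu)$ we get $\verts_\LL(I)+\N^n=\NN_\LL(I)\subset\NN_\LL(I_\mu)$.

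For part (ii), the inclusion $\supseteq$ on $\{\LL(\beta)\le l\}$ is immediate from (i). For $\subseteq$, suppose $\beta\in\NN_\LL(I_\mu)$ with $\LL(\beta)\le l$; I need $\beta\in\NN_\LL(I)$. Pick $G\in I_\mu$ with $\inexp_\LL(G)=\beta$ and write $G=\sum_j b_j G_j$. Replacing each $G_j$ by $F_j$ changes $G$ by $\sum_j b_j(G_j-F_j)$, whose support lies at $\LL$-levels $\ge\mu+1>l\ge\LL(\beta)$; so the resulting element $G'=\sum_j b_j F_j\in I$ still has $\inexp_\LL(G')=\beta$, giving $\beta\in\NN_\LL(I)$. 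Combined with (i) this yields equality of the two diagrams below level $l$. The one point that needs care — and which I expect to be the main technical obstacle — is making rigorous the claim that adding terms of strictly higher $\LL$-level to a power series cannot change its initial exponent: this is where the choice of the \emph{lexicographic refinement} $(\LL(\beta),\beta_1,\dots,\beta_n)$ in the definition of the ordering matters, since it guarantees that $\inexp_\LL$ is the globally $\LL$-minimal element of the support and that any two exponents at different $\LL$-levels are comparable with the lower level strictly smaller. Once that monotonicity is stated cleanly as a sublemma, both parts follow by the bookkeeping above; no standard-basis machinery is actually required for this particular lemma, only the coefficient-wise stability of $\inexp_\LL$ under high-$\LL$-level perturbations.
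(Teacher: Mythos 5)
Your argument is correct. Note that the paper does not reprove this lemma --- it is quoted verbatim from \cite[Lemma 6.5]{JP} --- but your proof is essentially the one given there: part (i) by transporting representatives $H_i=\sum_j a_{ij}F_j$ of the vertices to $\widetilde H_i=\sum_j a_{ij}G_j\in I_\mu$, part (ii) by the reverse substitution, with both resting on the observation that a perturbation supported in $\nn_{\LL,\mu+1}$ cannot move an initial exponent sitting at $\LL$-level $\leq\mu$, since the ordering induced by $\LL$ refines comparison of $\LL$-values. The sublemma you flag as the "main technical obstacle" is immediate from that definition of the ordering, so there is no gap.
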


The following is an immediate consequence of the above lemma. 
\begin{lemma}
	\label{lem:eqdiags}
	If $I = (F_1, \dots, F_k)$ is an ideal in $\K\{x\}$, 
	$\mu$ is an integer such that $\mu > \max\{|\beta|: \beta \in \verts(I)\}$, and 
	$J \in U^{\mu}(I)$, then $H_I = H_J$ implies that $\NN(I) = \NN(J)$. 
\end{lemma}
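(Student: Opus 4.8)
The plan is to deduce this directly from Lemma~\ref{lem:diagram-up-to-l} together with Lemma~\ref{lem:HS-diagram-complement}, specialized to the standard linear form $\LL(\beta)=|\beta|$. Since $\mu>\max\{|\beta|:\beta\in\verts(I)\}$, we have in particular $\mu\geq l_0$ where $l_0=\max\{|\beta^i|:\beta^i\in\verts(I)\}$, so part~(i) of Lemma~\ref{lem:diagram-up-to-l} applies and gives the inclusion $\NN(J)\supset\NN(I)$ for any $J\in U^\mu(I)$. Thus the complements satisfy $\N^n\setminus\NN(J)\subseteq\N^n\setminus\NN(I)$, and the containment $\NN(I)=\NN(J)$ will follow once we show the complements have the same cardinality in each degree, i.e. that the two sets actually coincide.

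First I would invoke Lemma~\ref{lem:HS-diagram-complement} with $\LL(\beta)=|\beta|$ to write, for every $\eta\geq1$,
\[
	H_I(\eta)=\#\{\beta\in\N^n\setminus\NN(I):|\beta|\leq\eta\},\qquad H_J(\eta)=\#\{\beta\in\N^n\setminus\NN(J):|\beta|\leq\eta\}.
\]
The hypothesis $H_I=H_J$ then forces
\[
	\#\{\beta\in\N^n\setminus\NN(I):|\beta|\leq\eta\}=\#\{\beta\in\N^n\setminus\NN(J):|\beta|\leq\eta\}\quad\text{for all }\eta\geq1.
\]
Combining this equality of cardinalities with the inclusion $\N^n\setminus\NN(J)\subseteq\N^n\setminus\NN(I)$ from the previous paragraph: each finite set $\{\beta\in\N^n\setminus\NN(J):|\beta|\leq\eta\}$ is a subset of $\{\beta\in\N^n\setminus\NN(I):|\beta|\leq\eta\}$ of the same (finite) cardinality, hence the two sets are equal for every $\eta$. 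Taking the union over all $\eta\geq1$ (and noting the degree-$0$ element, if any, is handled trivially since $0\in\NN(I)\iff 0\in\NN(J)\iff I=J=\K\{x\}$, which is excluded or immediate) yields $\N^n\setminus\NN(J)=\N^n\setminus\NN(I)$, i.e. $\NN(I)=\NN(J)$.

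I do not anticipate a genuine obstacle here; the statement is essentially bookkeeping once the two cited lemmas are in hand. The only point requiring a little care is making sure the inclusion goes the right way—Lemma~\ref{lem:diagram-up-to-l}(i) gives $\NN(J)\supset\NN(I)$, so it is the \emph{complement} of $\NN(J)$ that sits inside the complement of $\NN(I)$, and it is crucial that the smaller set is the one whose cardinality we are told equals the larger one's; a finite subset with the same cardinality as its ambient set must be the whole set. The hypothesis $\mu>\max\{|\beta|:\beta\in\verts(I)\}$ is exactly what licenses the use of part~(i) of Lemma~\ref{lem:diagram-up-to-l} (with room to spare, as that lemma only needs $\mu\geq l_0$), so no further conditions on $\mu$ are needed.
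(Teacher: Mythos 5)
Your proposal is correct and follows essentially the same route as the paper: both use Lemma~\ref{lem:diagram-up-to-l}(i) to get $\NN(I)\subseteq\NN(J)$ and then Lemma~\ref{lem:HS-diagram-complement} to compare the cardinalities of the truncated complements, the only cosmetic difference being that the paper argues by contradiction from a hypothetical $\beta\in\NN(J)\setminus\NN(I)$ while you argue directly that a finite subset of equal cardinality must be the whole set.
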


\begin{proof}
	Let $D_{\eta} = \{\beta\in\N^n:|\beta|\leq\eta\}$.
	Lemma \ref{lem:diagram-up-to-l} implies that $\NN(I) \subseteq \NN(J)$.
	Suppose that $\NN(I) \subsetneq \NN(J)$.
	Then, $\N^n \setminus \NN(J) \subsetneq \N^n \setminus \NN(I)$, 
	which implies that there exists 
	a $\beta \in \NN(J)$ such that $\beta \in \N^n \setminus \NN(I)$. Let 
	$\eta_0 \in \N$ be such that $|\beta| \leq \eta_0$. 
	Then $D_{\eta_0} \cap (\N^n \setminus \NN(J)) \subsetneq D_{\eta_0} \cap (\N^n \setminus \NN(I))$
	and by Lemma \ref{lem:HS-diagram-complement}, 
	$H_J(\eta_0) = \#(D_{\eta_0} \cap (\N^n \setminus \NN(J))) <  \#(D_{\eta_0} \cap (\N^n \setminus \NN(I))) = 
	H_I(\eta_0)$, which contradicts $H_I = H_J$.   
	Therefore $\NN(I) = \NN(J)$. 
\end{proof}

\subsection{Standard bases}
\label{subsec:sb}
Let again $\LL(\beta)=\sum_{j=1}^n\lambda_j\beta_j$ be a positive linear form on
$\K^n$, and let $\N^n$ be given the total ordering defined by the lexicographic
ordering of the $(n+1)$-tuples $(\LL(\beta),\beta_n,\dots,\beta_1)$.
For $F\in\K\{x\}$, let as before $\inexp_\LL{F}={\min}_\LL\{\beta\in\supp{F}\}$
denote the initial exponent of $F$ relative to $\LL$.

Let $I$ be an ideal in $\K\{x\}$.
A collection $S=\{G_1,\dots,G_t\}\subset I$ forms a \emph{standard basis} of $I$
(relative to $\LL$), when for every $F\in I \setminus \{ 0 \}$ there exists $i\in\{1,\dots,t\}$
such that $\inexp_\LL{F}\in\inexp_\LL{G_i}+\N^n$.

\begin{remark}
	\label{rem:st-bases}
	\begin{itemize}
		\item[(1)] It follows directly from definition that every
			standard basis $S$ of $I$ relative to $\LL$ contains
			representatives of all the vertices of the diagram
			$\NN_\LL(I)$ (that is, for every vertex $\beta^i$ of
			$\NN_\LL(I)$ there exists $G_i\in S$ with
			$\beta^i=\inexp_\LL{G_i}$). Also, it is a consequence \cite[Corollary 3.9]{BM1} of 
			Hironaka's Division Theorem \cite[Theorem 3.1, 3.4]{BM1}, 
			that a standard basis of $I$ generates $I$.  
		\item[(2)] Note that the term ``standard basis'' in most of
			modern literature refers to a collection defined by more
			restrictive conditions than the one above (see, e.g.,
			\cite[Corollary 3.9]{BM1} or \cite[Corollary 3.19]{BM2}). In
			particular, standard bases as defined 
			in this paper are not unique and may
			contain elements which do not represent vertices of the
			diagram.
	\end{itemize}
\end{remark}

For any $F=\sum_\beta f_\beta x^\beta$ and $G=\sum_\beta g_\beta x^\beta$ in
$\K\{x\}$, one defines their \emph{s-series} $S(F,G)$ with respect to $\LL$ as
follows: If $\beta_F=\inexp_\LL{F}$, $\beta_G=\inexp_\LL{G}$, and
$x^\gamma=\mathrm{lcm}(x^{\beta_F},x^{\beta_G})$, then
\[
	S(F,G)\coloneqq g_{\beta_G}x^{\gamma-\beta_F}\!\cdot F - f_{\beta_F}x^{\gamma-\beta_G}\!\cdot G\,.
\]
Given $G_1,\dots,G_t,F\in\K\{x\}$, $F$ has a \emph{standard
representation} in terms of $\{G_1,\dots,G_t\}$ with respect to $\LL$, when
there exist $Q_1,\dots,Q_t\in\K\{x\}$ such that
\[
	F=\sum_{i=1}^tQ_iG_i\qquad\mathrm{and}\qquad \inexp_\LL{F}\leq\min\{\inexp_\LL(Q_iG_i):i=1,\dots,t\}\,.
\]
Here, by convention $\inexp_\LL{F}<\inexp_\LL{0}$, for any $\LL$
and any non-zero $F$. 

\begin{remark}
	\label{rem:ssrep}
	It follows from Hironaka's Division theorem \cite[Theorems 3.1, 3.4]{BM1} that 
	if $F_1, \dots, F_t$ form a standard basis of 
	an ideal $I \subseteq \K\{x\}$ then every 
	$G \in I$ has a standard representation in 
	terms of $F_1, \dots, F_t$.  
\end{remark}

The following Theorem proved by Becker in \cite{Be1, Be2} gives a criterion 
for determining whether a collection of power series forms a standard basis. 
\begin{theorem}[{\cite[Theorem 4.1]{Be1}}]
	\label{thm:Becker}
	Let $S$ be a finite subset of $\K\{x\}$. Then, $S$ is a standard basis
	(relative to $\LL$) of the ideal it generates if and only if for any
	$G_1,G_2\in S$ the s-series $S(G_1,G_2)$ has a standard representation
	in terms of $S$.
\end{theorem}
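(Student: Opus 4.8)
The plan is to prove this exactly as one proves Buchberger's criterion for Gr\"{o}bner bases, the one substitution being that the role of ordinary polynomial division is played by Hironaka's Division Theorem \cite[Theorems 3.1, 3.4]{BM1}. Write $S=\{G_1,\dots,G_t\}$ and $I=(G_1,\dots,G_t)$. The forward implication is immediate: if $S$ is a standard basis of $I$ relative to $\LL$, then for any $G_i,G_j\in S$ the $s$-series $S(G_i,G_j)$ is a $\K\{x\}$-linear combination of $G_i$ and $G_j$, hence lies in $I$, so by Remark \ref{rem:ssrep} it has a standard representation in terms of $S$.

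For the converse, I would assume that every $s$-series $S(G_i,G_j)$ has a standard representation in terms of $S$ and show that each $F\in I\setminus\{0\}$ satisfies $\inexp_\LL(F)\in\inexp_\LL(G_j)+\N^n$ for some $j$. Starting from an arbitrary representation $F=\sum_{i=1}^tQ_iG_i$, put $\delta:=\min_i\inexp_\LL(Q_iG_i)$; then $\delta\leq\inexp_\LL(F)$, since the exponent $\inexp_\LL(F)$ occurs with nonzero coefficient in at least one of the products $Q_iG_i$. If $\delta=\inexp_\LL(F)$, then for an index $j$ realizing the minimum we get $\inexp_\LL(F)=\inexp_\LL(Q_jG_j)=\inexp_\LL(Q_j)+\inexp_\LL(G_j)\in\inexp_\LL(G_j)+\N^n$, and we are done. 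If instead $\delta<\inexp_\LL(F)$, the terms of $\LL$-order $\delta$ in $\sum_iQ_iG_i$ must cancel, and here I would invoke the syzygy lemma on leading terms (its proof is purely combinatorial in the monomials and coefficients, hence identical to the polynomial case) to rewrite $\sum_{i:\,\inexp_\LL(Q_iG_i)=\delta}\mathrm{lt}_\LL(Q_i)\,G_i$ as a combination of shifted $s$-series $x^{\delta-\gamma_{ij}}S(G_i,G_j)$, each of which has $\LL$-initial exponent strictly larger than $\delta$ because the leading terms of $S(G_i,G_j)$ cancel by construction. Substituting the hypothesised standard representations of the $S(G_i,G_j)$ and collecting terms then gives a new representation $F=\sum_iQ_i'G_i$ with $\min_i\inexp_\LL(Q_i'G_i)>\delta$ and still $\leq\inexp_\LL(F)$.

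Iterating this rewriting step produces representations of $F$ whose associated values form a strictly increasing chain $\delta<\delta'<\delta''<\cdots$, all bounded above by $\inexp_\LL(F)$ in the ordering induced by $\LL$. Positivity of $\LL$ makes the set $\{\beta\in\N^n:\beta\leq\inexp_\LL(F)\}$ finite, so the chain must stop; the final representation has its value equal to $\inexp_\LL(F)$, and the first case above then gives $\inexp_\LL(F)\in\inexp_\LL(G_j)+\N^n$. Hence $S$ is a standard basis of $I$, and Remark \ref{rem:st-bases}(1) confirms that this $I$ is the ideal generated by $S$, so the statement is unambiguous. The part I expect to require the most care is the rewriting step of the second paragraph: one has to verify that passing through the $s$-series really does raise $\delta$ strictly, which rests on the cancellation of the leading terms of each $S(G_i,G_j)$ and on the translation invariance of the ordering induced by $\LL$. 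Once that is secured, termination is automatic from the finiteness just noted, so --- contrary to what one might expect in a power series ring --- no convergence or completion argument enters the proof.
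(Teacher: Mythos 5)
The paper offers no proof of this theorem: it is imported verbatim from Becker \cite[Theorem 4.1]{Be1}, so there is nothing in the text to compare your argument against. On its own terms your sketch is correct and follows the natural route: the forward implication is Hironaka division (Remark \ref{rem:ssrep} applied to $S(G_1,G_2)\in I$), and the converse is Buchberger's rewriting argument. The two points where the power series setting could conceivably cause trouble are exactly the ones you isolate, and you dispose of both correctly: each pass of the rewriting strictly increases $\delta$ because the leading terms of each $S(G_i,G_j)$ cancel and the ordering induced by $\LL$ is translation-invariant (note also that $x^{\delta}$ is divisible by the lcm $x^{\gamma_{ij}}$, so the shifts $x^{\delta-\gamma_{ij}}$ are genuine monomials); the chain of $\delta$'s is confined to the set $\{\beta\in\N^n:\beta\leq_{\LL}\inexp_\LL F\}$, which is finite precisely because all $\lambda_i>0$; and each rewriting step performs only finitely many sums and products of convergent series, so no completion or convergence argument is needed. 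The one ingredient left as a black box is the syzygy lemma for the leading monomials at level $\delta$, but as you say this is a purely combinatorial statement about finitely many terms with the same exponent and is unchanged from the polynomial case. In the terminal case $\delta=\inexp_\LL F$ you should make explicit that you are using additivity $\inexp_\LL(Q_jG_j)=\inexp_\LL Q_j+\inexp_\LL G_j$, which holds because $\K\{x\}$ is a domain and the ordering is compatible with addition of exponents. With those details filled in, the argument is complete.
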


\section{Finite determinacy of flat maps}
\label{sec:fd}

\subsection{Proof of Theorem \ref{thm:cmfindet}}
	Suppose that $\dim \OO_{X, 0} = n - k$, and
	$\OO_{X,0} \cong \K\{x\}/I$ where $I = (F_1, \dots, F_s)$.
	Further, let $J$ be the ideal in 
	$\K\{x\}$ generated by $\phi_1, \dots, \phi_m$. 
	Suppose $\phi$ is flat at zero.
	Then 
	$\phi_1, \dots, \phi_m$ form an $\OO_{X, 0}$-regular sequence 
	by \cite[Theorem B.8.11]{GLS}, and hence $\dim \K\{x\}/(I + J) = n - k - m$.  
	Up to a generic linear change of coordinates, by Proposition \ref{prop:diagram-dim}, the latter 
	equivalent to saying that 
	$\NN(I + J)$ has a vertex on each of the first $k+m$ coordinate axes. 
	Let $G_1, \dots, G_{k+m} \in I + J$ be representatives 
	of these vertices. There are $Q^{i}_{p} \in \K\{x\}$ for $1\leq i \leq m$, $1\leq p\leq k+m$ such that
	\begin{equation*}
		G_i = \sum_{p = 1}^{s} Q^{i}_{p} F_p + \sum_{q = 1}^{m} Q^{i}_{s + q} \phi_q, 
		\text{ for } i = 1, \dots, k+m.
	\end{equation*}
	Fix $\mu' \geq \max\{|\beta|:\beta \in \verts(I+J)\}$.
	Then for any $\mu \geq \mu'$, 	
	\begin{equation}
		\label{eq:findet2}
		\inexp(j^{\mu} G_i) = \inexp(G_i), \text{ for } i = 1, \dots,k+m.  
	\end{equation}
	Now suppose that $\psi = (\psi_1, \dots, \psi_m):X \rightarrow \K^m$ is such that 
	$j^{\mu}\psi_q = j^{\mu} \phi_q$ for $q = 1, \dots, m$ and 
	set $\hat{J} = (\psi_1, \dots, \psi_m) \subseteq \K\{x\}$. Then, 
	for $i = 1, \dots, k+m$, 
	\begin{eqnarray*}
		j^{\mu} (\sum_{p = 1}^{s} Q^{i}_{p} F_p + 
		\sum_{q = 1}^{m} Q^{i}_{s + q} \psi_q) 
			    &=& j^{\mu}  (\sum_{p = 1}^{s} Q^{i}_{p} F_p + 
		\sum_{q = 1}^{m} Q^{i}_{s + q} j^{\mu}\psi_q) \\
			    &=& j^{\mu} (\sum_{p = 1}^{s} Q^{i}_{p} F_p + 
		\sum_{q = 1}^{m} Q^{i}_{s + q} j^{\mu}\phi_q) \\
			    &=& j^{\mu} (\sum_{p = 1}^{s} Q^{i}_{p} F_p + 
		\sum_{q = 1}^{m} Q^{i}_{s + q} \phi_q) \\
			    &=& j^{\mu} G_i
	\end{eqnarray*}
	Hence, by \eqref{eq:findet2}, $\NN(I + \hat{J})$ has a vertex on each of the 
	first $k+m$ coordinate axes. By Proposition \ref{prop:diagram-dim} again, 
	\begin{equation*}
		\dim \K\{x\}/(I = \hat{J}) \leq n-k-m.
	\end{equation*}
	But $\dim \K\{x\}/(I + \hat{J}) \geq (n - k) - m$ because 
	$\hat{J}$ has $m$ generators, therefore, 
	$\dim \K\{x\}/(I + \hat{J}) = n - k - m$. Then, by \cite[Theorem B.8.11]{GLS},
	$\psi = (\psi_1, \dots, \psi_m)$ is 
	flat at zero. Therefore, (i) implies (ii) if $\mu \geq \mu'$. The implication (ii) $\implies$ (iii) is trivial. 

	For (iii) implies (i), $\K\{x\}/I$ may be identified with the 
	graph of $\phi$. That is consider $\K\{x, y\}/L$ where $y = (y_1, \dots, y_m)$ and 
	$L = (F_1, \dots, F_s, y_1 - \phi_1(x), \dots, y_m - \phi_m(x))$. Now, suppose that 
	$\phi$ is not flat at zero. Then by Proposition \ref{prop:hirflat2} there is 
	a non-zero $R \in L$ supported outside of 
	$\NN(L(0))$, where the evaluation is at $y = 0$.  
	Write $R = \sum_{i = 1}^{s} H_iF_i + \sum_{p = 1}^{m} K_p (y_p - \phi_p)$ for 
	some $H_i,K_p \in \K\{x\}$, $1\leq i \leq s$, $1 \leq p \leq m$.  
	Let $\mu'' \geq \max\{l_0, |\inexp(R)|\}$, where 
	$l_0 = \max\{|\beta|:\beta \in \verts(L(0))\}$. Now, for 
	$\mu \geq \mu''$, and a map 
	$\psi=(\psi_1, \dots, \psi_m)$ where 
	$j^\mu \psi_i = j^\mu \phi_i$ for $1\leq i \leq m$, 
	let $\tilde{R} = \sum_{i = 1}^{s} H_iF_i + \sum_{k = 1}^{m} K_k (y_k - \psi_k)$. 
	Then, $\tilde{R} \in \hat{L}$ where $\hat{L} = (F_1, \dots, F_s, y_1 - \psi_1(x), \dots, y_m - \psi_m(x))$. 
	By the choice of $\mu$, $\inexp(R) = \inexp(\tilde{R})$. By Lemma \ref{lem:diagram-up-to-l}, 
	$\inexp(\tilde{R}) \notin \NN(\hat{L}(0))$, which implies that $\psi$ is not 
	flat by Propositions \ref{prop:hirflat2}. The 
	theorem holds for $\mu_0 \geq \max\{\mu', \mu''\}$.  
	\qed.

By \cite[Theorem B.8.11]{AS3} the flatness of $\phi = (\phi_1 \dots, \phi_m):X \rightarrow \K^m$
at zero is equivalent to $\phi_1, \dots, \phi_m$ forming an $\OO_{X, 0}$-regular sequence. 
Therefore, the following is an immediate consequence of Theorem \ref{thm:cmfindet}. 
\begin{corollary}
	\label{cor:regseqfindet}
	Suppose that $A = \K\{x\}/I$ is a Cohen-Macaulay ring 
	and that $\phi_1, \dots, \phi_m$ is a sequence of elements of $A$. 
	Then there exists $\mu_0 \in \N$ such that the following are equivalent:
	\begin{itemize}
		\item[(i)] $\phi_1, \dots, \phi_m$ is a regular sequence. 
		\item[(ii)] For all $\mu \geq \mu_0$ any sequence $\psi_1, \dots, \psi_m \in A$ satisfying
			$j^{\mu}\phi_i = j^{\mu}\psi_i$ for all $i$ is a regular sequence. 
		\item[(iii)] There exists $\mu \geq \mu_0$ such that any sequence $\psi_1, \dots, \psi_m \in A$ satisfying
			$j^{\mu}\phi_i = j^{\mu}\psi_i$ for all $i$ is a regular sequence.
	\end{itemize}
\end{corollary}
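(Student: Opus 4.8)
The plan is to derive the corollary directly from Theorem~\ref{thm:cmfindet}; the only substance is the translation between ``regular sequence'' and ``flat map'' and between the two notions of $\mu$-jet. Let $X$ be a $\K$-analytic germ with $\OO_{X,0}\cong A$. The case in which some $\phi_i$ is a unit in $A$ is trivial: such a $\phi_i$ lies in no regular sequence, being a unit is already visible in the $0$-jet, so each of (i), (ii), (iii) has the same truth value and any $\mu_0$ works. Hence I would assume from now on that each $\phi_i\in\mm_A$. Choosing analytic representatives $\phi_i\in\K\{x\}$ with $\phi_i(0)=0$ yields a $\K$-analytic map $\phi=(\phi_1,\dots,\phi_m):X\to\K^m$ with $\phi(0)=0$, and since $A=\K\{x\}/I$ is Cohen--Macaulay, the flatness criterion \cite[Theorem B.8.11]{GLS} --- exactly as used in the proof of Theorem~\ref{thm:cmfindet} --- shows that an $m$-tuple of elements of $\mm_A$ is an $A$-regular sequence if and only if the associated analytic map $X\to\K^m$ sending $0$ to $0$ is flat at $0$; in particular this applies to $\phi_1,\dots,\phi_m$ and $\phi$.

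Next I would reconcile the jets. If $\psi_i,\phi_i\in\K\{x\}$ satisfy $j^\mu\psi_i=j^\mu\phi_i$ then their classes in $A$ agree modulo $\mm_A^{\mu+1}$. Conversely, given $\bar\psi_i\in A$ with $\bar\psi_i\equiv\bar\phi_i\pmod{\mm_A^{\mu+1}}$, any representative $\psi_i'\in\K\{x\}$ satisfies $\psi_i'-\phi_i\in\mm^{\mu+1}+I$, so writing $\psi_i'-\phi_i=a_i+b_i$ with $a_i\in\mm^{\mu+1}$ and $b_i\in I$, the series $\psi_i\coloneqq\psi_i'-b_i$ represents $\bar\psi_i$ and satisfies $j^\mu\psi_i=j^\mu\phi_i$. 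Thus the $m$-tuples $\psi_1,\dots,\psi_m\in A$ with $j^\mu\psi_i=j^\mu\phi_i$ are exactly the classes of analytic maps $\psi:X\to\K^m$ with $j^\mu\psi=j^\mu\phi$, and by the first paragraph flatness of such a $\psi$ at $0$ corresponds to $\psi_1,\dots,\psi_m$ being a regular sequence.

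Finally I would apply Theorem~\ref{thm:cmfindet} to $\phi$, obtaining $\mu_0\in\N$ that makes its conditions (i)--(iii) equivalent, and transport them: condition (i) of the theorem becomes condition (i) of the corollary by the first paragraph, while conditions (ii) and (iii) of the theorem become (ii) and (iii) of the corollary by the second paragraph, in both directions --- a map $\psi$ as in Theorem~\ref{thm:cmfindet} induces an $m$-tuple in $A$ with matching $A$-jets, and conversely any such $m$-tuple is realised by an admissible $\psi$. I expect the only delicate point to be precisely this bookkeeping --- the passage between jets in $A$ and jets in $\K\{x\}$ and the reduction to $\phi_i\in\mm_A$; once it is in place the statement follows at once from Theorem~\ref{thm:cmfindet}.
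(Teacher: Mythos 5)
Your proposal is correct and follows the same route as the paper, which presents this corollary as an immediate consequence of Theorem~\ref{thm:cmfindet} combined with the flatness criterion \cite[Theorem B.8.11]{GLS} identifying flatness of $\phi$ at $0$ with $\phi_1,\dots,\phi_m$ being an $\OO_{X,0}$-regular sequence. The extra bookkeeping you supply (the unit case and the passage between jets in $A$ and in $\K\{x\}$) is exactly the detail the paper leaves implicit, and it is handled correctly.
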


\section{Approximation of flat maps}
\label{sec:approx}

\subsection{Flat maps from complete intersections}

\begin{theorem}
	\label{thm:ciapprox}
	Let $X$ be a $\K$-analytic subspace of $\K^n$. Suppose that $0 \in X$ and 
	the local ring $\OO_{X, 0}$ is a complete intersection. Also, let 
	$\phi = (\phi_1, \dots, \phi_m): X \rightarrow \K^n$ be a $\K$-analytic mapping
	that is flat at zero. Then there exists $\mu_0 \in \N$ such that for each 
	$\mu \geq \mu_0$, every map $\psi = (\psi_1, \dots, \psi_m): X \rightarrow \K^n$
	that satisfies $j^{\mu}\psi = j^{\mu}\phi$ has the following properties: 
	\begin{itemize}
		\item[(i)] $\psi$ is flat at zero.
		\item[(ii)] $H_{\phi^{-1}(0),0} = H_{\psi^{-1}(0),0}$. 
	\end{itemize}
\end{theorem}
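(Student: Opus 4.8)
The plan is to reduce Theorem \ref{thm:ciapprox} to the already-established Theorem \ref{thm:cmfindet} together with \cite[Theorem 7.3]{JP}, as the introduction promised. First I would observe that a complete intersection local ring is in particular Cohen-Macaulay, so Theorem \ref{thm:cmfindet} applies directly to $\phi$: there is $\mu_1 \in \N$ such that for every $\mu \geq \mu_1$, any $\psi$ with $j^\mu \psi = j^\mu \phi$ is flat at zero. This gives part (i) immediately (for $\mu_0 \geq \mu_1$), and moreover, since $\phi$ is flat, $\phi_1,\dots,\phi_m$ form an $\OO_{X,0}$-regular sequence by \cite[Theorem B.8.11]{GLS}, so $\OO_{X,0}/(\phi_1,\dots,\phi_m) = \K\{x\}/(I+J)$ is itself a complete intersection (quotient of a complete intersection by a regular sequence), and likewise $\K\{x\}/(I+\hat{J})$ is a complete intersection for the flat approximation $\psi$, where $J = (\phi_1,\dots,\phi_m)$ and $\hat{J} = (\psi_1,\dots,\psi_m)$.

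For part (ii) I would then invoke \cite[Theorem 7.3]{JP}, which (as cited in the introduction as the tool for equisingular algebraic approximation of complete intersection singularities) should say that for a complete intersection germ defined by $H_1,\dots,H_r$, there is a bound $\mu_2$ such that any $H'_1,\dots,H'_r$ agreeing with the $H_i$ up to order $\mu_2$ define a complete intersection with the same Hilbert-Samuel function. Applying this to the generating set $F_1,\dots,F_s,\phi_1,\dots,\phi_m$ of $I+J$ — whose quotient $\K\{x\}/(I+J)$ is the special fibre germ $\phi^{-1}(0)_0$ and is a complete intersection by the previous paragraph — yields $\mu_2$ such that any perturbation of these generators to order $\mu_2$ has a special fibre with the same Hilbert-Samuel function. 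Since $j^\mu\psi = j^\mu\phi$ forces $j^\mu\psi_q = j^\mu\phi_q$ for each $q$ while $F_1,\dots,F_s$ are unchanged, for $\mu \geq \mu_2$ the perturbed generators are $F_1,\dots,F_s,\psi_1,\dots,\psi_m$, so $H_{I+J} = H_{I+\hat J}$, which by the identification \eqref{eq:hsdef2} is exactly $H_{\phi^{-1}(0),0} = H_{\psi^{-1}(0),0}$. One takes $\mu_0 = \max\{\mu_1,\mu_2\}$.

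The one point requiring a little care, and the most likely obstacle, is matching the precise hypotheses and conclusion of \cite[Theorem 7.3]{JP} to what is needed here: that result was designed to produce algebraic (or Nash) approximations, so I need to check that its statement genuinely gives preservation of the Hilbert-Samuel function for an arbitrary analytic perturbation agreeing to high order (not merely for a specially constructed algebraic one), and that the bound depends only on the ideal $I+J$ and not on choices made in \cite{JP}'s construction. If \cite[Theorem 7.3]{JP} is phrased only for algebraic targets, the fix is routine: its proof must proceed by controlling the diagram of initial exponents via a bound on the degrees of vertices (as in Lemma \ref{lem:diagram-up-to-l} and Lemma \ref{lem:eqdiags} above), and that mechanism applies verbatim to any analytic perturbation. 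I would phrase the argument so that it is insensitive to this distinction — i.e., extract from \cite[Theorem 7.3]{JP} exactly the diagram-stability statement and then conclude via Lemma \ref{lem:eqdiags} — so that the proof is self-contained modulo the cited results. Everything else is bookkeeping: complete intersection is preserved under quotient by a regular sequence, flatness is equivalent to the regular-sequence condition, and the jet hypothesis transfers between the map and its component ideal generators.
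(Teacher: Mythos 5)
Your proposal is correct and follows essentially the same route as the paper: flatness of the perturbation via finite determinacy of the regular-sequence condition, and preservation of the Hilbert--Samuel function by applying \cite[Theorem 7.3]{JP} to the complete intersection ideal generated by $F_1,\dots,F_s,\phi_1,\dots,\phi_m$. The only cosmetic difference is that the paper cites \cite[Corollary 4.8]{AS3} directly for part (i) where you invoke Theorem \ref{thm:cmfindet} (which subsumes it), and your worry about \cite[Theorem 7.3]{JP} is unfounded --- the paper applies it verbatim to arbitrary analytic perturbations of the generators.
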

\begin{proof}
	Suppose that $\OO_{X,0} \cong \K\{x\}$ where $I = (F_1, \dots, F_k)$ for some 
	$F_1, \dots, F_k \in \K\{x\}$ that form a regular sequence in $\K\{x\}$. By 
	\cite[Theorem B.8.11]{GLS} the flatness of $\phi$ implies that 
	$\phi_1, \dots, \phi_m$ form an $\OO_{X,0}$-regular sequence.
	This is equivalent to saying that, $F_1, \dots, F_k, \phi_1, \dots, \phi_m$ form  
	a regular sequence in $\K\{x\}$. Therefore 
	the ideal $J = (F_1, \dots, F_k, \phi_1, \dots, \phi_m)$ is 
	a complete intersection ideal. Further, 
	the local ring of the special fibre of $\phi$, satisfies 
	$\OO_{\phi^{-1}(0),0} \cong \K\{x\}/J$. By 
	\cite[Corollary 4.8]{AS3} there exists  $\mu' \in \N$ 
	such that for each $\mu \geq \mu'$ and 
	any $\psi_1, \dots, \psi_m$ that satisfy $j^{\mu}\psi_i = j^{\mu}\phi_i$ for 
	$1 \leq i \leq m$, $F_1, \dots, F_k, \psi_1, \dots, \psi_m$ form 
	a regular sequence in $\K\{x\}$. This implies that the 
	map $\psi = (\psi_1, \dots, \psi_m):X\rightarrow \K^m$ is 
	flat at zero. 
	By \cite[Theorem 7.3]{JP} there exists $\mu'' \in \N$ such that for 
	all $\mu \geq \mu''$, $J_{\mu} = (F_1, \dots, F_k, \psi_1, \dots, \psi_m)$ 
	is a complete intersection and $H_{J_{\mu}} = H_J$. The 
	theorem thus holds for $\mu_0 \geq \max\{\mu', \mu''\}$. 
\end{proof}

\subsection{Flat maps from Cohen-Macaulay analytic germs}
\subsection*{Proof of Theorem \ref{thm:cmapprox}}
	Suppose that $\OO_{X,0} = \K\{x\}/I$ and $I = (F_1, \dots, F_s)$. 
	By Proposition~\ref{prop:diagram-dim}, after a generic linear change of
	coordinates in $\K^n$, the diagram $\NN(I)$ has a vertex on each of the
	first $k$ coordinate axes in $\N^n$. It follows that $\K\{x\}/I$ is
	$\K\{\tx\}$-finite, where $\tx = (x_{k+1}, \dots, x_n)$ and hence $\K\{\tx\}$-flat 
	because it is Cohen-Macaulay (Remark~\ref{rem:CM-flat}).
	Therefore, by Proposition~\ref{prop:flatness-diagram} there exists $l_1\geq1$ such that for the linear form
	\begin{equation*}
		\LL_1(\beta)=\sum_{i=1}^k\beta_i+\sum_{j=k+1}^n\!\!l_1\beta_j\,,
	\end{equation*}
	one has, 
	\begin{equation}
		\label{eq:orderdiag1}
		\NN_{\LL_1}(I)=D_1\times\N^{n-k}\,\text{ for some }D_1 \subset \N^{k}.
	\end{equation}
	The set $\{F_1, \dots, F_s\}$ may be extended by power series $F_{s+1},\dots,$ $F_r\in I$ such that
	the collection $\{F_1,\dots,F_r\}$
	contains representatives of all the vertices $\NN_{\LL_1}(I)$. Since $I$ is
	generated by $\{F_1,\dots,F_s\}$, there are $H_p^q\in\K\{x\}$ such that
	\begin{equation*}
		F_{s+p}=\sum_{q=1}^sH_p^qF_q\,,\qquad p=1,\dots,r-s.
	\end{equation*}
	Then, $\{F_1,\dots,F_r\}$ is a set of generators of $I$ and a standard basis of
	$I$ relative to $\LL_1$ by Remark \ref{rem:st-bases}. 
	For
	$i,j\in\{1,\dots,r\}$, $i<j$, let $S_{i,j}=S(F_i,F_j)$ denote the s-series
	of the pair $(F_i,F_j)$.
	By Theorem~\ref{thm:Becker}, there exist $Q^{i,j}_m\in\K\{x\}$, $i,j,m\in\{1,\dots,r\}$, 
	such that
	\begin{equation*}	
		S_{i,j}=\sum_{m=1}^rQ^{i,j}_mF_m\qquad\mathrm{and}\qquad \inexp_{\LL_1}{S_{i,j}}\leq\min\{\inexp_{\LL_1}(Q^{i,j}_mF_m):m=1,\dots,r\}\,.
	\end{equation*} 
	Let $J = (\phi_1, \dots, \phi_m) \subseteq \K\{x\}$. The flatness of $\phi$ implies that the representatives of $\phi_1, \dots, \phi_m$
	form a regular sequence in $\K\{x\}/I$. 

	By the fact that a quotient of a Cohen-Macaulay ring by a regular sequence is Cohen-Macaulay \cite[Corollary B.8.3]{GLS}, the ring $\K\{x\}/(I + J)$ is Cohen-Macaulay with 
	dimension $\dim (\K\{x\}/I) - m = n-k-m$. Therefore, as above, after a generic linear change of coordinates in $\K^n$ there exists 
	$l_2 \geq 1$ such that for the linear form 
	\begin{equation*}
		\LL_2(\beta) = \sum_{i=1}^{k+m} \beta_i + \sum_{j=k+m+1}^n l_2\beta_j\,,
	\end{equation*}
        one has 	
	\begin{equation}
		\label{eq:orderdiag2}
		\NN_{\LL_2}(I+J) = D_2 \times \N^{n-k-m}\, \text{ for some }D_2 \subset \N^{k+m}.
	\end{equation}
	Since \emph{generic} linear changes of coordinates form an open dense subset of the space of 
	all linear changes of coordinates, and both the coordinate changes made above  
	are generic, one can, in fact, choose one change of coordinates for which both \eqref{eq:orderdiag1} and 
	\eqref{eq:orderdiag2} hold.

	One has $I + J = (F_1, \dots, F_s, \phi_1, \dots, \phi_m)$. This set of 
	generators may be extended by $\phi_{m+1}, \dots, \phi_{l}$, such that 
	$F_1, \dots, F_s, \phi_1, \dots, \phi_l$ form a standard basis for $I + J$ with respect to $\LL_2$. 
	We then have $\bar{H}_p^q \in \K\{x\}$ such that
	\begin{equation*}
		\phi_{m+p} = \sum_{q = 1}^s \bar{H}_p^q F_q + \sum_{q = 1}^{m} \bar{H}_p^{q+s} \phi_q \,,\qquad p = 1, \dots, l-m.
	\end{equation*}
	Let $S_{i,j}^{(1)} = S(F_i, F_j)$ for $1 \leq i < j \leq s$,
	$S_{i,j}^{(2)} = S(F_i, \phi_j)$ for $i \in \{1, \dots, s\}$, $j \in \{1, \dots, l\}$, and 
	$S_{i,j}^{(3)} = S(\phi_i, \phi_j)$, for $1 \leq i < j \leq l$ be the s-series with respect to the ordering induced by $\LL_2$. 
	By Theorem~\ref{thm:Becker} there exist: 
	$\tilde{Q}^{i, j}_m, \bar{Q}^{i, j}_m, \hat{Q}^{i, j}_m \in \K\{x\}$ with index ranges
	$1 \leq i < j \leq s$, $i \in \{1, \dots, s\}$, $j \in \{1, \dots, l\}$, and $1 \leq i < j \leq l$ respectively, such that
	\begin{equation*}
		S_{i,j}^{(1)} = \sum_{m = 1}^{s} \tilde{Q}^{i,j}_m F_m + \sum_{m = 1}^{l} \tilde{Q}^{i,j}_{m + s} \phi_m
	\end{equation*}
	\begin{equation*}
		S_{i,j}^{(2)} = \sum_{m = 1}^{s} \bar{Q}^{i,j}_m F_m + \sum_{m = 1}^{l} \bar{Q}^{i,j}_{m + s} \phi_m
	\end{equation*}
	and
	\begin{equation*}
		S_{i,j}^{(3)} = \sum_{m = 1}^{s} \hat{Q}^{i,j}_m F_m + \sum_{m = 1}^{l} \hat{Q}^{i,j}_{m + s} \phi_m
	\end{equation*}
	In the above equations $\inexp_{\LL_2} S_{i,j}^{(1)} \leq \min\{e_1, e_2\}$, $\inexp_{\LL_2} S_{i,j}^{(2)} \leq \min\{f_1, f_2\}$, and 
	$\inexp_{\LL_2} S_{i,j}^{(3)} \leq \min\{g_1, g_2\}$ where, 
	\begin{eqnarray*}
		e_1 &=& \min\{\inexp_{\LL_2}(\tilde{Q}^{i,j}_mF_m):m=1,\dots,s\},\\
		e_2 &=& \min\{\inexp_{\LL_2}(\tilde{Q}^{i,j}_{m+r}\phi_m):m=1,\dots,l\},\\
		f_1 &=& \min\{\inexp_{\LL_2}(\bar{Q}^{i,j}_mF_m):m=1,\dots,s\},\\
		f_2 &=& \min\{\inexp_{\LL_2}(\bar{Q}^{i,j}_{m+r}\phi_m):m=1,\dots,l\},\\
		g_1 &=& \min\{\inexp_{\LL_2}(\hat{Q}^{i,j}_mF_m):m=1,\dots,s\},\\
		g_2 &=& \min\{\inexp_{\LL_2}(\hat{Q}^{i,j}_{m+r}\phi_m):m=1,\dots,l\}.
	\end{eqnarray*}
	Now, there are monomials 
	$P_{i,j}, \bar{P}_{j,i}, P^{(1)}_{i,j}, \bar{P}_{j,i}^{(1)}, P^{(2)}_{i,j}, 
	\bar{P}_{j,i}^{(2)}, P^{(3)}_{i,j}, \bar{P}^{(3)}_{i,j}$ such that 
	\begin{equation*}
		S_{i,j} = P_{i,j}F_i - \bar{P}_{i,j}F_j 
	\end{equation*}
	\begin{equation*}
		S^{(1)}_{i,j} = P^{(1)}_{i,j}F_i - \bar{P}^{(1)}_{i,j}F_j
	\end{equation*}
	\begin{equation*}
		S^{(2)}_{i,j} = P^{(2)}_{i,j}F_i - \bar{P}^{(2)}_{i,j}\phi_j
	\end{equation*}
	\begin{equation*}
		S^{(3)}_{i,j} = P^{(3)}_{i,j}\phi_i - \bar{P}^{(3)}_{i,j}\phi_j
	\end{equation*}
	Further, these monomials only depend on the initial terms of the $F_k, \phi_k$ involved on the right 
	hand sides of the above equations taken with respect to the appropriate ordering (i.e., the one 
	induced by $\LL_1$, or $\LL_2$). 
	Consider the following system of equations in variables 
	$y=(y_1,\dots,y_r)$, $z=(z^{1,2}_1,\dots,z^{r-1,r}_r)$, $w=(w_1^1,\dots,w_{r-s}^s)$, 
	$\tilde{y} = (\tilde{y}_1, \dots, \tilde{y}_l)$, $\bar{z} = (\tilde{z}^{1,1}_1, \dots, \tilde{z}^{s,l}_{s+l})$, 
	$\bar{y} = (\bar{y}_1, \dots, \bar{y}_l)$, $\bar{z} = (\bar{z}^{1,1}_1, \dots, \bar{z}^{s,l}_{s+l})$, 
	$\hat{z}=(\hat{z}^{1,2}_{1}, \dots, \hat{z}^{l-1,l}_{s+l})$, $\bar{w} = (\bar{w}_1^1, \dots, \bar{w}_{l-m}^{s+l})$.
	\begin{equation}
		\label{eq:sysflat}
		\begin{cases}
			\displaystyle{P_{i,j}(x)y_i-\bar{P}_{j,i}(x)y_j-\sum_{m=1}^rz^{i,j}_my_m=0} &{~}\\
			\displaystyle{P_{i,j}^{(1)}(x)y_i-\bar{P}_{j,i}^{(1)}(x)\tilde{y}_j-\sum_{m=1}^s\tilde{z}^{i,j}_m y_m - \sum_{m = 1}^l \tilde{z}^{i,j}_{m+s} \tilde{y}_m=0}  &{~}\\
			\displaystyle{P_{i,j}^{(2)}(x)y_i-\bar{P}_{j,i}^{(2)}(x)\bar{y}_j-\sum_{m=1}^s\bar{z}^{i,j}_m y_m - \sum_{m = 1}^l \bar{z}^{i,j}_{m+s} \bar{y}_m=0}  &{~}\\
			\displaystyle{P_{i,j}^{(3)}(x)\bar{y}_i-\bar{P}_{j,i}^{(3)}(x)\bar{y}_j-\sum_{m=1}^s\hat{z}^{i,j}_m y_m - \sum_{m = 1}^l \hat{z}^{i,j}_{m+s} \bar{y}_m=0}  &{~}\\
			\displaystyle{y_{s+p}-\sum_{q=1}^sw_p^qy_q}=0 &{~}\\
			\displaystyle{\bar{y}_{s+p}-\sum_{q=1}^s\bar{w}_p^qy_q} - \sum_{q=1}^m\bar{w}_p^{q+s} =0 &{~}
		\end{cases}
	\end{equation}
	This system has a solution in convergent power series 
	$\{F_i,Q^{i,j}_m, \tilde{Q}^{i,j}_m, \bar{Q}^{i,j}_m, H_p^q, \phi_i, \hat{Q}^{i,j}_m, \bar{H}_p^q\}$, and hence
	by \cite[Theorem 1.10]{Ar2}, for every $\mu \in \N$, an algebraic power series solution
	$\{G_i,R^{i,j}_m, \tilde{R}^{i,j}_m, \bar{R}^{i,j}_m, K_p^q, \psi_i, \hat{R}^{i,j}_m, \bar{K}_p^q\}$ such that
	\begin{eqnarray*}
		j^{\mu} G_i = j^{\mu}F_i\\
		j^{\mu} R^{i,j}_m = j^{\mu} Q^{i,j}_m \\
		j^{\mu} \tilde{R}^{i,j}_{m} = j^{\mu} \tilde{Q}^{i,j}_{m} \\
		j^{\mu} \bar{R}^{i,j}_{m} = j^{\mu} \bar{Q}^{i,j}_{m} \\
		j^{\mu} \hat{R}^{i,j}_{m} = j^{\mu} \hat{Q}^{i,j}_{m} \\ 
		j^{\mu} \psi_i = j^{\mu} \phi_i 
	\end{eqnarray*}
	for all allowable values of the indices. 

	Taking $\mu_0$ sufficiently large, so as to satisfy all the inequalities on the initial exponents
	with respect to both orderings (i.e., the ordering corresponding to $\LL_1$ and the one corresponding to $\LL_2$) 
	, by Theorem~\ref{thm:Becker}, the $G_i$ form a standard basis of the ideal 
	$\hat{I} = (G_1, \dots, G_r)$. In particular, the set $\{G_1, \dots, G_r\}$ contains representatives of 
	all the vertices of $\NN_{\LL_1}(\hat{I})$ (see Remark~\ref{rem:st-bases}(1)). Since by 
	construction, $\inexp_{\LL_1}{G_i} = \inexp_{\LL_1}{F_i}$ for all $i$, it follows that 
	$\NN_{\LL_1}(\hat{I}) = \NN_{\LL_1}(I)$. 
	Thus, $\NN_{\LL_1}(\hat{I})=D_1\times\N^{n-k}$ and so $\K\{x\}/\hat{I}$ is
	$\K\{\tilde{x}\}$-flat, by Proposition~\ref{prop:flatness-diagram}.
	Note also that $\hat{I}$ is, in fact, generated by $\{G_1,\dots,G_s\}$, since the
	remaining generators $G_{s+1},\dots,G_r$ are combinations of the former,
	by~\eqref{eq:sysflat}.

	The equality of the diagrams $\NN_{\LL_1}(\hat{I}) = \NN_{\LL_1}(I)$ implies, by
	Proposition \ref{prop:flatness-diagram} and Remark \ref{rem:CM-flat} that 
	$\K\{x\}/\hat{I}$ is Cohen-Macaulay. 
	Also, Lemma \ref{lem:HS-diagram-complement} implies that, 
	\begin{equation}
		\label{eq:dim11}
		\dim_\K\frac{\K\{x\}}{I+\nn_{\LL_1,\eta+1}}\ =\ \dim_\K\frac{\K\{x\}}{\hat{I}+\nn_{\LL_1,\eta+1}}\qquad\mathrm{for\ all\ }\eta\in\N\,.
	\end{equation}
	The equality $H_{I} = H_{\hat{I}}$ now follows from  
	\cite[Corollary 5.3]{JP}, the $\K\{\tx\}$-flatness of 
	$\K\{x\}/\hat{I}$, and \cite[Proposition 6.7]{JP} exactly as in the proof of 
	\cite[Theorem 7.3]{JP} and \cite[Theorem 8.1]{JP}. 
	Let $\hat{J} = (\psi_1, \dots, \psi_m)$. Then, by a similar argument in 
	terms of the ordering corresponding to $\LL_2$, $\K\{x\}/(\hat{I} + \hat{J})$ is Cohen-Macaulay 
	and $(\hat{I} + \hat{J})$ has 
	the same Hilbert-Samuel function as $(I + J)$. This implies 
	that $\dim \K\{x\}/(I + J) = \dim \K\{x\}/(\hat{I} + \hat{J})$. 
	Finally, by \cite[Theorem B.8.11]{GLS}
	the map $\psi : \hat{X}_0 \rightarrow \K^n_0$ defined by 
	$\psi = (\psi_1, \dots, \psi_m)$ is flat at zero. 
	\qed.

In the case when the germ $X$ is already Nash, it is not necessary to approximate the domain.  
\begin{corollary}
	\label{cor:cmaprox}
	Let $X$ be a $\K$-analytic subspace of $\K^n$. Suppose that $0 \in X$, $X_0$ is a Nash germ,
	the local ring 
	$\OO_{X,0}$ is Cohen-Macaulay, and that $\OO_{X,0} = \K\{x\}/I$. Also, let $\phi=(\phi_1, \dots, \phi_m): X \rightarrow \K^m$ 
	with $\phi(0) = 0$ be a $\K$-analytic mapping which is flat at 0. Then for some $\mu_0 \in \N$, and 
	each $\mu \geq \mu_0$ there is  
	a Nash map 
	$\psi =(\psi_{1}, \dots \psi_{m}) : X \rightarrow \K^m$ which is flat at 0 and such that 
	$j^{\mu} \phi_{i} = j^{\mu} \psi_{i}$ for all $i$. Further, if $J = (\phi_{1}, \dots, \phi_{m}) \subseteq \K\{x\}$, and 
	$\hat{J} = (\psi_{1}, \dots, \psi_{m}) \subseteq \K\langle x \rangle \subseteq \K\{x\}$, then we 
	have $H_{I + J} = H_{I + \hat{J}}$.
\end{corollary}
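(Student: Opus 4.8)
The plan is to follow the proof of Theorem \ref{thm:cmapprox} almost verbatim, but to observe that when $X_0$ is already a Nash germ the defining ideal $I$ can be taken to be generated by Nash power series from the outset, and hence the construction of an approximating domain $\hat{X}_0$ becomes unnecessary: one simply sets $\hat{X}_0 = X_0$. Concretely, write $\OO_{X,0} = \K\{x\}/I$ with $I = (F_1, \dots, F_s)$ where, since $X_0$ is Nash, we may assume each $F_i \in \K\langle x \rangle$. As in the proof of Theorem \ref{thm:cmapprox}, after a generic linear change of coordinates we obtain a linear form $\LL_1$ and a set $D_1$ with $\NN_{\LL_1}(I) = D_1 \times \N^{n-k}$, and a linear form $\LL_2$ with $\NN_{\LL_2}(I+J) = D_2 \times \N^{n-k-m}$, where $J = (\phi_1, \dots, \phi_m)$; the flatness of $\phi$ guarantees that $\K\{x\}/(I+J)$ is Cohen-Macaulay of dimension $n-k-m$ by \cite[Corollary B.8.3]{GLS}. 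We then extend $\{F_1, \dots, F_s\}$ by further $F_{s+1}, \dots, F_r \in I$ and by $\phi_{m+1}, \dots, \phi_l$ so that $\{F_1, \dots, F_r\}$ is a standard basis of $I$ relative to $\LL_1$ and $\{F_1, \dots, F_s, \phi_1, \dots, \phi_l\}$ is a standard basis of $I + J$ relative to $\LL_2$, and we record the standard representations of the relevant s-series as in the proof of Theorem \ref{thm:cmapprox}.

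The essential difference is in how Artin's approximation is applied. Instead of treating the coefficients of the $F_i$ as unknowns in the system \eqref{eq:sysflat}, we \emph{freeze} the generators of $I$: we only solve for the $\phi_i$ (and the various quotient series appearing in the standard representations), imposing that the $F_i$ are kept fixed. Since the $F_i$ are already Nash, this is consistent with seeking a Nash solution. The analogue of \eqref{eq:sysflat} thus becomes the subsystem consisting of the equations involving the s-series $S^{(1)}_{i,j} = S(F_i, F_j)$, $S^{(2)}_{i,j} = S(F_i, \phi_j)$, $S^{(3)}_{i,j} = S(\phi_i, \phi_j)$ and the expression $\phi_{m+p} = \sum_q \bar{H}^q_p F_q + \sum_q \bar{H}^{q+s}_p \phi_q$, now viewed as equations in the unknowns $\tilde{y}, \bar{y}$ (corresponding to $\phi_1, \dots, \phi_l$) together with the coefficient-unknowns of the standard-representation quotients, with the $F_i$ appearing as fixed Nash coefficients. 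This system has the convergent solution given by the original $\phi_i$ and their standard-representation data, so by \cite[Theorem 1.10]{Ar2} it has, for every $\mu$, a Nash solution $\{\psi_i, \dots\}$ with $j^\mu \psi_i = j^\mu \phi_i$ for all $i$.

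Taking $\mu_0$ large enough to preserve all the inequalities on initial exponents with respect to both $\LL_1$ and $\LL_2$, Becker's criterion (Theorem \ref{thm:Becker}) shows that $\{F_1, \dots, F_s, \psi_1, \dots, \psi_l\}$ is a standard basis of $\hat{J} + I = (F_1, \dots, F_s, \psi_1, \dots, \psi_m)$ relative to $\LL_2$, with $\inexp_{\LL_2} \psi_i = \inexp_{\LL_2} \phi_i$ for all $i$; hence $\NN_{\LL_2}(I + \hat{J}) = \NN_{\LL_2}(I+J) = D_2 \times \N^{n-k-m}$. By Proposition \ref{prop:flatness-diagram} this makes $\K\{x\}/(I + \hat{J})$ a flat $\K\{(x_{k+m+1}, \dots, x_n)\}$-module, and combined with finiteness over this subring and Remark \ref{rem:CM-flat} it is Cohen-Macaulay of dimension $n - k - m$. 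The Hilbert-Samuel equality $H_{I+J} = H_{I+\hat{J}}$ then follows exactly as in the proof of Theorem \ref{thm:cmapprox}: the equality of the $\LL_2$-diagrams gives, via Lemma \ref{lem:HS-diagram-complement}, that $\dim_\K \K\{x\}/((I+J) + \nn_{\LL_2, \eta+1}) = \dim_\K \K\{x\}/((I+\hat{J}) + \nn_{\LL_2, \eta+1})$ for all $\eta$, and then \cite[Corollary 5.3]{JP}, \cite[Proposition 6.7]{JP} together with the $\K\{\tx\}$-flatness upgrade the equality of these ``$\LL_2$-Hilbert-Samuel functions'' to the equality of the ordinary ones. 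Finally, since $\dim \K\{x\}/(I+J) = \dim \K\{x\}/(I+\hat{J}) = n - k - m$, the flatness criterion \cite[Theorem B.8.11]{GLS} shows that $\psi = (\psi_1, \dots, \psi_m) : X \to \K^m$ is flat at zero. The main point requiring care, rather than a genuine obstacle, is checking that freezing the $F_i$ in the Artin system is legitimate — that is, that the monomials $P_{i,j}^{(1)}$, $\bar{P}_{i,j}^{(1)}$, etc. depend only on the (unchanged) initial terms of the $F_i$ and on those of the $\psi_i$, which match those of the $\phi_i$ once $\mu_0$ is large; this is exactly the observation already used in the proof of Theorem \ref{thm:cmapprox}, so no new difficulty arises.
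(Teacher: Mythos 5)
Your proposal is correct and takes essentially the same route as the paper: the paper's own justification of Corollary \ref{cor:cmaprox} is precisely the remark that the Nash generators $F_i$ of $I$ can be frozen and fed into the system \eqref{eq:sysflat} as coefficients, so that Artin approximation \cite[Theorem 1.10]{Ar2} is applied only to $\phi$ and the auxiliary quotient series, after which the argument of Theorem \ref{thm:cmapprox} goes through unchanged. Your write-up simply fills in the details of that sketch (correctly), including the point that the monomials $P^{(2)}_{i,j}$, $P^{(3)}_{i,j}$, etc.\ depend only on initial exponents and hence are unaffected by the approximation.
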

In this case there is no need to approximate the generators of $I$ because they are already algebraic power 
series. They appear as coefficients in the system of equations for 
approximating $\phi$. As in the proof of Theorem \ref{thm:cmapprox}, 
\cite[Theorem 1.10]{Ar2} will yield the required approximating map $\psi$.

\subsection{Preservation of tangent cones} 
If $X \subset \C^n$ is a complex analytic space, and $X_p$ is the germ of $X$ at $p \in X$, then
\emph{Whitney's tangent cone}(cf. \cite{Wh}) $C(X_p)$ of $X_p$ is defined as follows: 
$x \in \C^n$ belongs to $C(X_p)$ if and only if there is a sequence of points $\{p_{x,i}\}_{i = 1}^{\infty} 
\subseteq X$ and a sequence of complex numbers $\{a_{x,i}\}_{i = 1}^{\infty}$ such that 
$a_{i,x}(p - p_{i, x}) \rightarrow x$ as $i \rightarrow \infty$.  

Let $x = (x_1, \dots, x_n)$ and $F \in \C\{x\}$. Then $F$ may be written 
as the sum of homogeneous polynomials: $F = \sum_{i = 1}^{\infty} \sum_{|\beta| = i} f_{\beta} x^{\beta}$. 
Set $F^{(i)} = \sum_{|\beta| = i} f_{\beta} x^{\beta}$. If $i^* = \min \{i \in \N : F^{(i)} \neq 0\}$ then 
$\ins(F) = F^{(i^*)}$ is called the \emph{initial form} of $F$. If $I$ is an ideal in 
$\C\{x\}$ then $\ins(I) = \{\ins(F) : F\in I\}$ is called the \emph{ideal of initial forms} of $I$.
If $\OO_{X, p} = \C\{x\}/I$ then by
\cite[Theorem 10.6]{Wh}, $C(X_p) =\{x \in \C^n : \ins(F) (x) = 0 \text{ for all } F \in I\}$.  

\begin{lemma}
	\label{lem:initforms2}
	If $I \subset \C\{x\}$ is an ideal, and $F_1, \dots, F_s$ is a standard basis for 
	$I$ then $\ins(F_1), \dots, \ins(F_s)$ generate $\ins(I)$. 
\end{lemma}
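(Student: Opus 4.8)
The plan is to exploit the fact that the diagram of initial exponents with respect to the standard grading $\LL(\beta)=|\beta|$ controls the initial form construction, combined with Hironaka's Division Theorem. First I would observe that the containment $(\ins(F_1),\dots,\ins(F_s))\subseteq\ins(I)$ is immediate: each $\ins(F_i)$ is by definition the initial form of an element of $I$, hence lies in $\ins(I)$. The content of the lemma is the reverse containment, so the task is to show that for an arbitrary $F\in I\setminus\{0\}$, the initial form $\ins(F)$ lies in the ideal generated by the $\ins(F_i)$.

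To establish this, I would first pass to a standard basis of $I$ relative to the linear form $\LL(\beta)=|\beta|$; note that the hypothesis "standard basis" in this paper (Remark \ref{rem:st-bases}) is with respect to a fixed positive linear form, and the statement should be read with $\LL(\beta)=|\beta|$, so that $\inexp_\LL(F)$ records the lowest-degree part and ties are broken lexicographically. The key point is that for this choice of $\LL$, if $\beta=\inexp_\LL(F)$ then $|\beta|=i^*$ is exactly the order of $F$, and $\ins(F)$ is the degree-$i^*$ homogeneous component of $F$. By Remark \ref{rem:ssrep}, every $F\in I$ has a standard representation $F=\sum_{i=1}^s Q_iF_i$ with $\inexp_\LL(F)\leq\min_i\inexp_\LL(Q_iF_i)$; in particular, writing $d=|\inexp_\LL(F)|=\operatorname{ord}(F)$, each term $Q_iF_i$ has order $\geq d$.

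Now I would extract degree-$d$ homogeneous components from the standard representation. Write $\operatorname{ord}(Q_i)=e_i$ and $\operatorname{ord}(F_i)=d_i$, so that $e_i+d_i\geq d$ for every $i$ with $Q_i\neq0$. The degree-$d$ component of $F$ is
\[
	\ins(F)\ =\ F^{(d)}\ =\ \sum_{\substack{i:\,e_i+d_i=d}}\ins(Q_i)\cdot\ins(F_i)\,,
\]
since any term with $e_i+d_i>d$ contributes nothing in degree $d$, and in a product the lowest-degree piece is the product of the lowest-degree pieces. This exhibits $\ins(F)$ as a $\C\{x\}$-linear (indeed, $\C$-polynomial) combination of the $\ins(F_i)$, proving $\ins(I)\subseteq(\ins(F_1),\dots,\ins(F_s))$. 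The main obstacle to be careful about is the degenerate case where the sum on the right is empty or where cancellation among the leading forms $\ins(Q_i)\ins(F_i)$ occurs: cancellation cannot happen because the standard-representation inequality forces $\min_i|\inexp_\LL(Q_iF_i)|\geq d$ while $F$ itself has order exactly $d$, so the degree-$d$ parts must sum to the nonzero form $\ins(F)$ and in particular the sum is not identically zero. Thus the representation is genuine, and the lemma follows. (Implicitly this also uses Remark \ref{rem:st-bases}(1), that a standard basis generates $I$, so that the standard representation of Remark \ref{rem:ssrep} is available for \emph{every} element of $I$.)
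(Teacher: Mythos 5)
Your proposal is correct and follows essentially the same route as the paper: take the standard representation $F=\sum_i Q_iF_i$ guaranteed by Remark \ref{rem:ssrep} (with $\LL(\beta)=|\beta|$, so that $|\inexp(F)|=\operatorname{ord}(F)$), and extract the lowest-degree homogeneous component, which gives $\ins(F)=\sum_{i\in S}\ins(Q_i)\ins(F_i)$ over the set $S$ of indices where $|\inexp(Q_iF_i)|=|\inexp(F)|$. Your extra remarks on the trivial inclusion and on why no total cancellation can occur are correct but not needed beyond what the identity itself already shows.
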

\begin{proof}
	If $G \in I$, then by Remark \ref{rem:ssrep}
	\begin{equation*}
		G = \sum_{i = 1}^{s} Q_i F_i \text{ for some }Q_i \in \C\{x\}, 1 \leq i \leq s,  
	\end{equation*}
	where $\inexp(G) \leq \min{ \inexp(Q_i F_i)}$. Now, let $S = \{i : |\inexp(Q_i F_i)| = |\inexp(G)|\}$. 
	Then, $|\inexp(G)| \leq \min_i\{|\inexp(Q_i F_i)|\}$, and $\ins(F_i Q_i) 
	= \ins(F_i) \ins(Q_i)$ imply that
	\begin{equation*}
		\ins(G) = \sum_{i \in S} \ins(Q_i) \ins(F_i).
	\end{equation*}
\end{proof}

\begin{lemma}
	\label{lem:initforms3}
	If $I = (F_1, \dots, F_k)$ is an ideal in $\C\{x\}$, $\mu$ 
	is an integer such that $\mu > \max \{|\beta| : \beta \in \verts(I)\}$, 
	and $J \in U^{\mu}(I)$, then $\NN(I) = \NN(J)$ implies that 
	$\ins(I) = \ins(J)$. 
\end{lemma}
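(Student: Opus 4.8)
The plan is to carry a standard basis of $I$ over to a standard basis of $J$ that has the same initial forms vertex by vertex, and then to invoke Lemma~\ref{lem:initforms2}.

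First I would set up the transfer. All diagrams here are taken with respect to the form $\LL(\beta)=|\beta|$. Let $\beta^1,\dots,\beta^t$ be the vertices of $\NN(I)$, so that $l_0:=\max_j|\beta^j|<\mu$ by hypothesis, and pick $\tilde F_j\in I$ with $\inexp(\tilde F_j)=\beta^j$ for each $j$. Then $\{\tilde F_1,\dots,\tilde F_t\}$ contains representatives of all vertices of $\NN(I)$, so it is a standard basis of $I$, and in particular it generates $I$ (Remark~\ref{rem:st-bases}). Since also $I=(F_1,\dots,F_k)$, I can write $\tilde F_j=\sum_{i=1}^kP_{ji}F_i$ with $P_{ji}\in\C\{x\}$. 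Writing $J=(G_1,\dots,G_k)$ with $j^\mu G_i=j^\mu F_i$ (this is what $J\in U^\mu(I)$ means), I set $\tilde G_j:=\sum_{i=1}^kP_{ji}G_i\in J$; since the $\mu$-jet of a product depends only on the $\mu$-jets of its factors, $j^\mu\tilde G_j=j^\mu\bigl(\sum_iP_{ji}F_i\bigr)=j^\mu\tilde F_j$, i.e. $\tilde G_j-\tilde F_j\in\mm^{\mu+1}$.

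Next I would observe that $\ins(\tilde G_j)=\ins(\tilde F_j)$ and $\inexp(\tilde G_j)=\beta^j$ for every $j$: the power series $\tilde F_j$ has order $|\beta^j|\le l_0<\mu$, whereas every monomial occurring in $\tilde G_j-\tilde F_j$ has degree $\ge\mu+1>|\beta^j|$, so $\tilde G_j$ and $\tilde F_j$ have identical homogeneous components in all degrees $\le\mu$; in particular they have the same (nonzero) lowest-degree component, which is $\ins(\tilde F_j)$, and hence the same initial exponent. Consequently $\{\tilde G_1,\dots,\tilde G_t\}\subset J$ and $\{\inexp(\tilde G_j):1\le j\le t\}=\{\beta^1,\dots,\beta^t\}$, which is precisely the vertex set of $\NN(J)$ because $\NN(J)=\NN(I)$ by hypothesis. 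With the (loose) notion of standard basis used in this paper this is enough to conclude that $\{\tilde G_1,\dots,\tilde G_t\}$ is a standard basis of $J$: for any $F\in J\setminus\{0\}$ we have $\inexp(F)\in\NN(J)=\bigcup_{j}(\beta^j+\N^n)=\bigcup_j(\inexp(\tilde G_j)+\N^n)$. In particular $\{\tilde G_1,\dots,\tilde G_t\}$ generates $J$ (Remark~\ref{rem:st-bases}).

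Finally, applying Lemma~\ref{lem:initforms2} to the standard basis $\{\tilde F_j\}$ of $I$ and to the standard basis $\{\tilde G_j\}$ of $J$ gives
\[
	\ins(I)=\bigl(\ins(\tilde F_1),\dots,\ins(\tilde F_t)\bigr)\quad\text{and}\quad \ins(J)=\bigl(\ins(\tilde G_1),\dots,\ins(\tilde G_t)\bigr),
\]
and since $\ins(\tilde G_j)=\ins(\tilde F_j)$ for all $j$, the two ideals coincide. The one step that really needs the hypothesis $\mu>\max\{|\beta|:\beta\in\verts(I)\}$ — and that I expect to be the crux — is the identification of $\{\tilde G_j\}$ as a genuine, generating standard basis of $J$: it works precisely because $\mu$ exceeds the degrees of all vertices, so replacing $\tilde F_j$ by $\tilde G_j$ alters nothing below degree $\mu$ and therefore preserves each initial form and initial exponent; everything else is bookkeeping. (Together with Lemma~\ref{lem:eqdiags} this also yields $H_I=H_J\Rightarrow\ins(I)=\ins(J)$, as needed for the tangent-cone corollary.)
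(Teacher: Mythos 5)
Your proof is correct and follows essentially the same route as the paper's: both arguments express vertex representatives of one ideal in terms of its given generators, replace those generators by their $\mu$-jet-equivalent counterparts, use $\mu>\max\{|\beta|:\beta\in\verts(I)\}$ to see that initial exponents and initial forms are unchanged, invoke $\NN(I)=\NN(J)$ to recognize the transferred elements as a standard basis of the other ideal, and finish with Lemma~\ref{lem:initforms2}. The only (immaterial) difference is the direction of transfer — you carry a standard basis from $I$ to $J$, whereas the paper starts from vertex representatives of $\NN(J)$ and produces a standard basis of $I$.
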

\begin{proof}
	Let $G_1, \dots, G_k$ be the generators of $J$. Also, 
	let $\hat{G}_1, \dots \hat{G}_s$ be representatives of 
	the vertices of $\NN(J)$. Then, there exist $Q^i_j \in \C\{x\}$, 
	$1 \leq i \leq k$, $1 \leq j \leq s$,  
	such that, 
	\begin{equation*}
		\hat{G}_j = \sum_{i = 1}^{k} Q_j^i G_i, \text{ for } 1\leq j \leq s. 
	\end{equation*}
	Let $\hat{F}_j = \sum_{i = 1}^{k} Q_j^i F_i$ for $1\leq j \leq s$. 
	Now, if $\mu > \max \{|\beta| : \beta \in \verts(I)\}$ then 
	for each $1 \leq j \leq s$, 
	\begin{equation*}
		j^{\mu} \hat{F}_j = j^{\mu} ( \sum_{i = 1}^{k} Q_j^i j^{\mu}F_i) 
		= j^{\mu} (\sum_{i = 1}^{k} Q_j^i j^{\mu} G_i) = j^{\mu} \hat{G}_j
	\end{equation*}
	implies that $\inexp(\hat{F}_j) = \inexp(\hat{G}_j)$. Therefore, 
	$\hat{F}_1, \dots, \hat{F}_s$ is a standard basis for $I$, and 
	by Lemma \ref{lem:initforms2}, $\ins(I) = \ins(J)$. 
\end{proof}

The corollaries to Theorems \ref{thm:ciapprox} and \ref{thm:cmapprox}, below,  
follow immediately from Lemma \ref{lem:eqdiags}, Lemma \ref{lem:initforms3} above, and
\cite[Theorem 10.6]{Wh}. 
\begin{corollary}
	\label{cor:cicones}
	Let $X$ be a $\K$-analytic subspace of $\K^n$. Suppose that $0 \in X$ and 
	the local ring $\OO_{X, 0}$ is a complete intersection. Also, let 
	$\phi = (\phi_1, \dots, \phi_m): X \rightarrow \K^n$ be a $\K$-analytic mapping
	that is flat at zero. Then there exists $\mu_0 \in \N$ such that for each 
	$\mu \geq \mu_0$, every map $\psi = (\psi_1, \dots, \psi_m): X \rightarrow \K^n$
	that satisfies $j^{\mu}\psi = j^{\mu}\phi$ has the following properties: 
	\begin{itemize}
		\item[(i)] $\psi$ is flat at zero.
		\item[(ii)] $C(\psi^{-1}(0)_0) = C(\phi^{-1}(0)_0)$.  
	\end{itemize}
\end{corollary}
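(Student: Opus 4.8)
The plan is to assemble Corollary~\ref{cor:cicones} from Theorem~\ref{thm:ciapprox}, Lemmas~\ref{lem:eqdiags} and~\ref{lem:initforms3}, and Whitney's description of the tangent cone; since $C(\cdot)$ is only defined in the complex setting, I take $\K=\C$ here. Write $\OO_{X,0}\cong\C\{x\}/I$ with $I=(F_1,\dots,F_k)$ a regular sequence in $\C\{x\}$. As in the proof of Theorem~\ref{thm:ciapprox}, the germ of the special fibre $\phi^{-1}(0)_0$ has local ring $\C\{x\}/J$ with $J=(F_1,\dots,F_k,\phi_1,\dots,\phi_m)$, and for a map $\psi=(\psi_1,\dots,\psi_m)$ with $j^{\mu}\psi=j^{\mu}\phi$ the germ $\psi^{-1}(0)_0$ has local ring $\C\{x\}/J_{\mu}$ with $J_{\mu}=(F_1,\dots,F_k,\psi_1,\dots,\psi_m)$.

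First I would choose $\mu_0$ large enough to serve two purposes simultaneously: (a) to satisfy the conclusion of Theorem~\ref{thm:ciapprox}, so that for all $\mu\geq\mu_0$ the map $\psi$ is flat at zero and $H_J=H_{J_{\mu}}$; and (b) to satisfy $\mu_0>\max\{|\beta|:\beta\in\verts(J)\}$, the hypothesis needed to apply Lemmas~\ref{lem:eqdiags} and~\ref{lem:initforms3} to $J$. Because $j^{\mu}\psi_i=j^{\mu}\phi_i$ for all $i$ and the $F_i$ are left untouched, the displayed generating tuple of $J_{\mu}$ agrees with that of $J$ up to order $\mu$, i.e.\ $J_{\mu}\in U^{\mu}(J)$.

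Part (i) is then exactly Theorem~\ref{thm:ciapprox}(i). For part (ii), Theorem~\ref{thm:ciapprox}(ii) gives $H_J=H_{J_{\mu}}$; since $J_{\mu}\in U^{\mu}(J)$ and $\mu\geq\mu_0>\max\{|\beta|:\beta\in\verts(J)\}$, Lemma~\ref{lem:eqdiags} yields $\NN(J)=\NN(J_{\mu})$, and then Lemma~\ref{lem:initforms3} yields $\ins(J)=\ins(J_{\mu})$. By \cite[Theorem 10.6]{Wh}, $C(\phi^{-1}(0)_0)$ is the common zero set in $\C^n$ of $\ins(J)$ and $C(\psi^{-1}(0)_0)$ is the common zero set of $\ins(J_{\mu})$; since these ideals of initial forms coincide, so do the two cones.

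I expect the only delicate point, rather than a true obstacle, to be the bookkeeping around $\mu_0$: one must verify that the presentation of $J_{\mu}$ used to place it in $U^{\mu}(J)$ is the very presentation of $J$ whose vertices bound $\mu_0$, and that enlarging $\mu_0$ past the value produced by Theorem~\ref{thm:ciapprox} is harmless --- which it is, since flatness of $\psi$, the equality $H_J=H_{J_{\mu}}$, and the inclusions/equalities of diagrams are all stable under increasing $\mu$. The substantive analytic and combinatorial content --- controlling the Hilbert--Samuel function and the diagram under jet truncation, and passing from equal diagrams to equal ideals of initial forms --- has already been isolated in Theorem~\ref{thm:ciapprox} and Lemmas~\ref{lem:eqdiags} and~\ref{lem:initforms3}, so the corollary is essentially an assembly of these facts with \cite[Theorem 10.6]{Wh}.
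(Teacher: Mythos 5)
Your proposal is correct and follows exactly the route the paper intends: Theorem \ref{thm:ciapprox} supplies flatness and the equality of Hilbert--Samuel functions of the special fibres, and then Lemma \ref{lem:eqdiags}, Lemma \ref{lem:initforms3}, and \cite[Theorem 10.6]{Wh} convert this into equality of diagrams, of ideals of initial forms, and finally of tangent cones. Your observations that one must take $\K=\C$ and that the presentation $J_\mu=(F_1,\dots,F_k,\psi_1,\dots,\psi_m)$ places $J_\mu$ in $U^\mu(J)$ are exactly the bookkeeping the paper leaves implicit.
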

\begin{corollary}
	\label{cor:cmcones}
	Let $X$ be a $\C$-analytic subspace of $\C^n$. Suppose that $0 \in X$ and the local ring 
	$\OO_{X,0}$ is Cohen-Macaulay and that $\OO_{X,0} = \C\{x\}/I$ where 
	$I = (F_1, \dots, F_s)$. Also, let $\phi=(\phi_1, \dots, \phi_m): X \rightarrow \C^m$ 
	with $\phi(0) = 0$ be a $\C$-analytic mapping which is flat at 0. Then there exists $\mu_0 \in \N$, such that 
	for each $\mu \geq \mu_0$ there exist:  
	\begin{itemize}
		\item[(a)] A Nash, Cohen-Macaulay germ $\hat{X}_0 \subseteq \C^n_0$ with $\OO_{\hat{X}, 0} = \C\{x\}/\hat{I}$, where
			$\hat{I} = (G_{1}, \dots, G_{s})$, for some $G_{i} \in \C \langle x \rangle$ that satisfy
			$j^{\mu}G_{i} = j^{\mu} F_i$ for all $i$, such that $C(\hat{X}_0) = C(X_0)$.  
		\item[(b)] A Nash map 
			$\psi =(\psi_{1}, \dots \psi_{n}) : \hat{X} \rightarrow \K^m$ which is flat at 0 with 
			$j^{\mu} \phi_{i} = j^{\mu} \psi_{i}$ for all $i$ for which 
			$C(\psi^{-1}(0)_0) = C(\phi^{-1}(0)_0)$. 
	\end{itemize}
\end{corollary}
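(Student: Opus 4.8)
The plan is to read off the corollary from Theorem~\ref{thm:cmapprox} by translating the equalities of Hilbert--Samuel functions it produces into equalities of diagrams of initial exponents, then into equalities of ideals of initial forms, and finally into equalities of Whitney tangent cones via \cite[Theorem 10.6]{Wh}. First I would apply Theorem~\ref{thm:cmapprox} (with $\K=\C$) to obtain an integer $\mu_1\in\N$ so that for every $\mu\geq\mu_1$ there are a Nash Cohen--Macaulay germ $\hat{X}_0$ with $\OO_{\hat{X},0}=\C\{x\}/\hat{I}$, $\hat{I}=(G_1,\dots,G_s)$, $G_i\in\C\langle x\rangle$ and $j^{\mu}G_i=j^{\mu}F_i$, and a Nash map $\psi=(\psi_1,\dots,\psi_m):\hat{X}\to\C^m$, flat at $0$, with $j^{\mu}\psi_i=j^{\mu}\phi_i$, such that, writing $J=(\phi_1,\dots,\phi_m)$ and $\hat{J}=(\psi_1,\dots,\psi_m)$ in $\C\{x\}$, one has $H_{\hat{I}}=H_I$ and $H_{\hat{I}+\hat{J}}=H_{I+J}$. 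This already gives the germ, the map, and the jet conditions; only the tangent-cone assertions remain. I would then enlarge $\mu_1$ to $\mu_0=\max\{\mu_1,\ 1+\max\{|\beta|:\beta\in\verts(I)\},\ 1+\max\{|\beta|:\beta\in\verts(I+J)\}\}$, so that for all $\mu\geq\mu_0$ the hypotheses of Lemmas~\ref{lem:eqdiags} and \ref{lem:initforms3} are satisfied for both $I$ and $I+J$.

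For part (a): since $j^{\mu}G_i=j^{\mu}F_i$ for all $i$ we have $\hat{I}\in U^{\mu}(I)$, so from $H_I=H_{\hat{I}}$ Lemma~\ref{lem:eqdiags} yields $\NN(I)=\NN(\hat{I})$, and then Lemma~\ref{lem:initforms3} yields $\ins(I)=\ins(\hat{I})$. By \cite[Theorem 10.6]{Wh} the tangent cone $C(X_0)=\{x\in\C^n:\ins(F)(x)=0\text{ for all }F\in I\}$ depends only on the ideal of initial forms, hence $C(\hat{X}_0)=C(X_0)$.

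For part (b): the generators $G_1,\dots,G_s,\psi_1,\dots,\psi_m$ of $\hat{I}+\hat{J}$ satisfy $j^{\mu}G_i=j^{\mu}F_i$ and $j^{\mu}\psi_j=j^{\mu}\phi_j$, so $\hat{I}+\hat{J}\in U^{\mu}(I+J)$ relative to the generating set $F_1,\dots,F_s,\phi_1,\dots,\phi_m$; from $H_{I+J}=H_{\hat{I}+\hat{J}}$ Lemma~\ref{lem:eqdiags} gives $\NN(I+J)=\NN(\hat{I}+\hat{J})$, Lemma~\ref{lem:initforms3} gives $\ins(I+J)=\ins(\hat{I}+\hat{J})$, and since the germ of the special fibre $\phi^{-1}(0)_0$ (resp. $\psi^{-1}(0)_0$) is cut out by $I+J$ (resp. $\hat{I}+\hat{J}$), \cite[Theorem 10.6]{Wh} gives $C(\psi^{-1}(0)_0)=C(\phi^{-1}(0)_0)$. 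I do not expect a genuine obstacle here: the content lies entirely in Theorem~\ref{thm:cmapprox}, and the only point requiring care is the bookkeeping of $\mu_0$ — it must simultaneously dominate the vertex bounds for $I$ and $I+J$ (to feed Lemmas~\ref{lem:eqdiags} and \ref{lem:initforms3}) and be at least the threshold $\mu_1$ coming from Theorem~\ref{thm:cmapprox}; taking the maximum handles this. No analytic input beyond the cited results is needed.
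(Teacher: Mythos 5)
Your proposal is correct and follows exactly the route the paper takes: the paper states that Corollary \ref{cor:cmcones} follows immediately from Theorem \ref{thm:cmapprox} together with Lemma \ref{lem:eqdiags}, Lemma \ref{lem:initforms3}, and \cite[Theorem 10.6]{Wh}, which is precisely the chain (Hilbert--Samuel equality $\Rightarrow$ equal diagrams $\Rightarrow$ equal ideals of initial forms $\Rightarrow$ equal tangent cones) you spell out for both $I$ and $I+J$. Your extra care in choosing $\mu_0$ to dominate the vertex bounds of both $I$ and $I+J$ is exactly the bookkeeping the paper leaves implicit.
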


\bibliographystyle{amsplain}

\begin{thebibliography}{10}

\bibitem{JP}
J.~Adamus and A.~Patel.
\newblock Equisingular algebraic approximation of real and complex analytic
  germs.
\newblock {\em J. Singul.}, 20:289--310, 2020.

\bibitem{AS3}
J.~Adamus and H.~Seyedinejad.
\newblock Finite determinacy and stability of flatness of analytic mappings.
\newblock {\em Canad. J. Math.}, 69(2):241--257, 2017.

\bibitem{Ar2}
M.~Artin.
\newblock Algebraic approximation of structures over complete local rings.
\newblock {\em Inst. Hautes \'{E}tudes Sci. Publ. Math.}, (36):23--58, 1969.

\bibitem{Be1}
T.~Becker.
\newblock Stability and {B}uchberger criterion for standard bases in power
  series rings.
\newblock {\em J. Pure Appl. Algebra}, 66(3):219--227, 1990.

\bibitem{Be2}
T.~Becker.
\newblock Standard bases in power series rings: uniqueness and superfluous
  critical pairs.
\newblock {\em J. Symbolic Comput.}, 15(3):251--265, 1993.

\bibitem{BM1}
E.~Bierstone and P.~D. Milman.
\newblock {\em The local geometry of analytic mappings}.
\newblock Dottorato di Ricerca in Matematica. [Doctorate in Mathematical
  Research]. ETS Editrice, Pisa, 1988.

\bibitem{BM2}
E.~Bierstone and P.~D. Milman.
\newblock Canonical desingularization in characteristic zero by blowing up the
  maximum strata of a local invariant.
\newblock {\em Invent. Math.}, 128(2):207--302, 1997.


\bibitem{Fis}G.~Fischer. 
\newblock {\em Complex analytic geometry}.
\newblock Lecture Notes in Mathematics, Vol. 538. Springer-Verlag, Berlin-New York,  1976. {\rm vii}+201 pp.

\bibitem{GLS}
G.-M. Greuel, C.~Lossen, and E.~Shustin.
\newblock {\em Introduction to singularities and deformations}.
\newblock Springer Monographs in Mathematics. Springer, Berlin, 2007.

\bibitem{HMn}
H.~Hironaka.
\newblock Resolution of singularities of an algebraic variety over a field of
  characteristic zero. {I}, {II}.
\newblock {\em Ann. of Math. (2) {\bf 79} (1964), 109--203; ibid. (2)},
  79:205--326, 1964.

\bibitem{H}
H.~Hironaka.
\newblock Stratification and flatness.
\newblock In {\em Real and complex singularities ({P}roc. {N}inth {N}ordic
  {S}ummer {S}chool/{NAVF} {S}ympos. {M}ath., {O}slo, 1976)}, pages 199--265,
  1977.

\bibitem{Nar}
R.~Narasimhan.
\newblock {\em Introduction to the theory of analytic spaces}.
\newblock Lecture Notes in Mathematics, No. 25. Springer-Verlag, Berlin-New
  York, 1966.

\bibitem{NR}
D.~G. Northcott and D.~Rees.
\newblock Reductions of ideals in local rings.
\newblock {\em Proc. Cambridge Philos. Soc.}, 50:145--158, 1954.

\bibitem{Wh}
H.~Whitney.
\newblock Tangents to an analytic variety.
\newblock {\em Ann. of Math. (2)}, 81:496--549, 1965.

\end{thebibliography}

\end{document}